\numberwithin{equation}{section}
\def\vta{ \vartheta }
 \def\ta{ \theta }
\def\T{ \mathbb{T} }
\newcommand{\q}{{\mathbb{Q}}}
\newcommand{\R}{{\mathbb{R}}}
\def\div{ \hbox{\rm div}\,  }
\def\u{ \mathbf{u} }
\def\v{ \mathbf{v} }
\def\G{ \mathbf{G} }
\def\T{ \mathbb{T} }
\def\la{ \Lambda }
\def\aa{ \varphi }
\def\nn{\nonumber}
\newcommand{\norm}[2]{\left\lVert #1 \right\rVert_{#2}}
\theoremstyle{plain}
\newtheorem{theorem}{Theorem}[section]
\newtheorem{lemma}{Lemma}[section]
\numberwithin{equation}{section}
\begin{document}
\title[Viscous and non-resistive MHD equations]{Stability  for the  $2\frac12$-D compressible  viscous non-resistive and heat-conducting magnetohydrodynamic  flow}

	\subjclass[2020]{35Q35, 35A01, 35A02, 76W05}
	
	\keywords{Global solutions; Non-resistive compressible MHD; Decay rates}

	\author[X. Zhai]{Xiaoping Zhai}
	\address[X. Zhai]{School of Mathematics and Statistics, Guangdong University of Technology, Guangzhou, 510520, China}
	\email{pingxiaozhai@163.com}

\author[Y. Li]{Yongsheng Li}
\address[Y. Li]{School of Mathematics,
South China University of Technology,
Guangzhou, 510640, China}
\email{yshli@scut.edu.cn}

\author[Y. Zhao]{Yajuan Zhao}
\address[Y. Zhao]{Henan Academy of Big Data, ZhengZhou University,
Zhengzhou, 450001, China}
\email{zhaoyj\_91@zzu.edu.cn  (Corresponding author)}
	\begin{abstract}
	In this paper, we are concerned with the initial boundary values problem
associated to the    compressible  viscous non-resistive and heat-conducting magnetohydrodynamic  flow, where the magnetic field is vertical. More precisely, by exploiting the intrinsic structure of the system and introducing several new unknown quantities, we overcome the difficulty stemming from the lack of dissipation for  density and magnetic field, and prove the global well-posedness of strong solutions  in the framework of  Soboles spaces $H^3$. In addition, we also  get the exponential decay for this non-resistive system.
Different from the known results \cite{SIAM2021}, \cite{LXZ2023}, \cite{zhangshunhang},  we donot need the assumption that the background  magnetic field is positive here.
	\end{abstract}
	\maketitle
	
	\section{Introduction and main result}
\subsection {Model and synopsis of result}
The small data global well-posedness problem on the three dimensional (3D) non-isentropic compressible
viscous non-resistive magnetohydrodynamic (MHD) equations remains a challenging open
problem. Mathematically the concerned MHD equations are given by
\begin{eqnarray}\label{mm1}
    	\left\{\begin{aligned}
    		&\partial_t\rho+\div(\rho \v)=0,\\
    		&\rho(\partial_t\mathbf{v} + \v\cdot \nabla \v) -  \mu \Delta \v -(\lambda+\mu) \nabla \div \v
              +\nabla P=  (\nabla\times \mathbf{B})\times \mathbf{B},\\
    		&\rho C_\nu(\partial_t\vartheta+\v\cdot\nabla\vartheta)
                + P\div \v-\kappa\Delta\vartheta=2\mu |D(\v)|^2+\lambda(\div \v)^2 ,\\
            &\partial_t\mathbf{B}-\nabla\times(\v\times\mathbf{B})=0 ,\\
            &\div\mathbf{B}=0,\\
            &(\rho,\v,\vartheta,\mathbf{B})|_{t=0}=(\rho_0,\v_0,\vartheta_0,\mathbf{B}_0).
    	\end{aligned}\right.
    \end{eqnarray}
    Here,  $\rho$, $\v$, $\vartheta$, $\mathbf{B}$
    denotes the density, velocity field, temperature
    and magnetic field of the MHD fluids, respectively.
    The parameters $\mu$  and $\lambda$ are shear viscosity and volume viscosity coefficients,  respectively,
    which satisfy the standard  strong parabolicity assumption,
    \begin{align*}
    	\mu>0\quad\hbox{and}\quad
    	\nu:=\lambda+2\mu>0.
    \end{align*}
$C_\nu$ is a positive constant
    and $\kappa > 0$ is the heat-conductivity coefficient.
The fluid is assumed to obey the ideal polytropic law,
  so the pressure $P=R\rho\vartheta$ for a positive constant $R$.
The deformation tensor  $D(\v) = \frac{1}{2} \big( \nabla \v + (\nabla \v)^{T} \big)$.
The compressible MHD equations model the motion of electrically conducting fluids in the presence of
    a magnetic field.
The compressible MHD  equations  can be derived from the isentropic Navier-Stokes-Maxwell system by taking
    the zero dielectric constant limit \cite{lifucai}.

When the effect of the magnetic field can be neglected or $\mathbf{B} = 0$,
       \eqref{mm1} reduces to the compressible Navier-Stokes equations.
Some early works on the global existence theory and uniqueness of
    the initial value problem of compressible Navier-Stokes equations
    when the solution is perturbed near the equilibrium state
    were obtained by Matsumura and Nishida \cite{MN1979,MN1980,MN1983}.
Moreover, Matsumura and Nishida \cite{MN1979} obtained that
  the time-decay rate of the solution in the $H^2$ norm is $-\frac{3}{4}$ provided that
  the initial perturbation is sufficiently small in $H^4\cap L^1$.
Then the global well-posedness and time-decay estimates
  for this equations have been studied by many researchers,
  see \cite{GW2021,HLX2012,JWX2018,LX2019,XZ2021,ZC2020} and the references therein.
Recently, Wang and Wen \cite{WW2022} investigated the full compressible Navier-Stokes equations with reaction
  diffusion, and gave the results of global well-posedness and some optimal decay estimates of the solutions in the whole space.

When $\rho=\vartheta=$const, \eqref{mm1} becomes the incompressible MHD system without magnetic diffusion.
  For the incompressible MHD equations with full dissipation or partial dissipation,
  the global well-posedness and many other properties have been studied,
  see \cite{AZ2017,CL2018,CZZ2022,JYZ2017,PZZ2018,ST1983,WZ2021,SH2012} and the references therein.
When $\vartheta=$const, Hu and Wang \cite{HW2010} studied the global existence
   and large-time behavior of solutions to MHD equations \eqref{mm1} with full dissipation
   in a bounded domain $\Omega\subset\R^3$ with initial-boundary conditions.
And then the system \eqref{mm1} with full dissipation for the isentropic case has been studied
  in many papers, for examples,
   \cite{DF2006,WZZ2021,WWZ2022} and soon on.
Recently, Wu, Wang and Zhang \cite{WWZ2022} obtained  the upper optimal decay rates for higher
     order spatial derivatives of the solution:
$$\|\nabla(\rho-1)\|_{H^1}+\|\nabla \v\|_{H^1}
     +\|\nabla \mathbf{B}\|_{H^1}\lesssim(1+t)^{-\frac{3}{4}(\frac{2}{p}-1)-\frac{1}{2}},$$
where $p\in[1,2]$.
For the non-resistive compressible MHD equations,
    Li and Sun \cite{JDE2019} obtained the existence of global weak solutions for
    the motion of fluids takes place in a bounded regular domain $\Omega\subset\R^2$.
Very recently, Dong, Wu and Zhai \cite{DWZ2023}  considered
     the $2\frac{1}{2}$-D compressible viscous non-resistive MHD equations
     and established the upper optimal decay rates of the solution.

For the  non-isentropic case,  the full compressible MHD equations
    have attracted the interests of many physicists and mathematicians
    (see, e.g., \cite{CW2002,PG2013,HW2008,ST2022,xin1,xin2} and the references therein).
Recently, by using {\itshape a  priori} estimates and  the continuity argument,
   Huang and Fu \cite{HF2023} obtain the global-in-time solution of the 3D non-isentropic MHD system,
   where the initial data should be bounded in $L^2$ and is small in $H^2$.
   At the same time, they   got the optimal decay estimates of the highest-order derivatives:
   $$\|\nabla^k(\rho-1,\v,\vartheta-1,\mathbf{B})\|_{L^2(\R^3)}
      \lesssim (1+t)^{-\frac{3}{4}-\frac{k}{2}},\qquad k=0,1,2.$$
By assuming that the motion of fluids takes place in the plane
   while the magnetic field acts on the fluids only in the vertical direction,
    Li and Sun \cite{SIAM2021} obtained the existence of global weak solutions
    with large initial data for the non-resistive MHD system.
Then, Li \cite{JMFM2022} proved the global well-posedness of
      the three-dimensional full compressible viscous
      non-resistive MHD system in an infinite slab $\R^2\times(0,1)$
       with strong background magnetic field.
Li, Xu and Zhai \cite{LXZ2023} obtained the global small solutions of the MHD equations
    without magnetic diffusion in $\T^3$ when
     the initial magnetic field is close to a background magnetic field satisfying the Diophantine condition  much lately.
This paper focuses on the small data  global well-posedness problem
for a very special $2\frac12$-D  compressible non-resistive MHD system.
    The motion of fluids takes place in the plane
    while the magnetic field acts on fluids only in the vertical direction, namely
    \begin{align*}
    &\rho\stackrel{\mathrm{def}}{=}\rho(t,x_1,x_2),\quad\v=(\v^1(t,x_1,x_2),\v^2(t,x_1,x_2),0)\stackrel{\mathrm{def}}{=}(\u,0),\nn\\
    &\vartheta\stackrel{\mathrm{def}}{=}\vartheta(t,x_1,x_2),
    \quad\mathbf{B}\stackrel{\mathrm{def}}{=}(0,0,m(t,x_1,x_2)).
    \end{align*}
    Then \eqref{mm1} is reduced to
    \begin{eqnarray}\label{mm2}
    \left\{\begin{aligned}
    &\partial_t\rho+\div(\rho \u)=0,\\
    &\rho(\partial_t\mathbf{u} + \u\cdot \nabla \u) -  \mu \Delta \u -(\lambda+\mu) \nabla \div \u+\nabla P +\frac12\nabla m^2=  0,\\
    &\rho C_\nu(\partial_t\vartheta+\u\cdot\nabla\vartheta)
    + P\div \u-\kappa\Delta\vartheta=2\mu |D(\u)|^2+\lambda(\div \u)^2 ,\\
    &\partial_tm+\div(m \u)=0,\\
      &(\rho,\u,\vartheta,m)|_{t=0}=(\rho_0,\u_0,\vartheta_0,m_0).
    \end{aligned}\right.
    \end{eqnarray}
    Clearly $(\rho^{(0)}, \u^{(0)}, \vartheta^{(0)},m^{(0)})$ with
    $$
    \rho^{(0)}=1, \quad \u^{(0)}={\mathbf{0}},  \quad \vartheta^{(0)}={1}, \quad  m^{(0)}=0
    $$
    solves (\ref{mm2}).

\subsection {Main result}
From now on, attention is focused on the system \eqref{mm2}  for
$(t, x) \in  [0, \infty)\times\T^2$ with the volume of $\T^2$ normalized to unity,
$$|\T^2|=1.$$
For notational convenience, we write
\begin{align}\label{conserint0}
\int_{\T^2}\rho_0\,dx=\bar{\rho},\quad
\int_{\T^2}\rho_0\u_0\,dx=0.
\end{align}
In addition, we assume, for some positive constant $e_0$, that
\begin{align}\label{conserint1}
\int_{\mathbb T^2} \rho_0 \vta_0 \,dx+\frac12\int_{\mathbb T^2} \rho_0
|\u_0|^2dx+\frac12\int_{\mathbb T^2} |m_0|^2\,dx=e_0.
\end{align}
Owing to the conservation of total mass,
total momentum, and total energy,  we
have, for any $t\ge0,$ that
\begin{align}
	&\int_{\mathbb T^2} \rho(x) \,dx =\bar{\rho},
\quad\int_{\mathbb T^2}\rho(x) \u(x) \,dx =0,\label{conserint2}\\
	&\int_{\mathbb T^2} \rho \vta \,dx+\frac12\int_{\mathbb T^2} \rho
	|\u|^2dx+\frac12\int_{\mathbb T^2} |m|^2\,dx=e_0.\label{conserint3}
\end{align}
The main result of the paper is stated as follows.
\begin{theorem}\label{dingli}
 Assume that the initial data $(\rho_0,\u_0,\vartheta_0, m_0)$ satisfy \eqref{conserint0}, \eqref{conserint1} and
 $(\rho_0-\bar{\rho},\u_0,\vartheta_0-\bar{\vartheta}, m_0)\in H^{3}(\T^2)$ with
\begin{align}\label{pingjunwei0}
	 c_0\le\rho_0,\vartheta_0\le c_0^{-1}
\end{align}
for some constant $c_0>0$.
 If there exists a small constant $\varepsilon$ such that
\begin{align*}
\norm{(\rho_0-\bar{\rho},\u_0,\vartheta_0-\bar{\vartheta},m_0)}{H^{3}}\le\varepsilon,
\end{align*}
then the system \eqref{mm2} admits a unique  global  solution
$(\rho-\bar{\rho},\u,\vartheta-\bar{\vartheta},m)$ such that
\begin{align*}
&(\rho-\bar{\rho},m)\in C(\R^+;\ H^{3}),\quad
(\u,\vartheta-\bar{\vartheta}) \in C(\R^+;\ H^{3})\cap L^{2} (\R^+;\ H^{4}).
\end{align*}
Moreover, for any $t\ge 0$,
there holds
\begin{align*}
\norm{(\u,\vartheta-\bar{\vartheta})}{H^{{3}}}\le C_1 e^{-C_1t},
\end{align*}
for some pure constant $C_1 >0$.
\end{theorem}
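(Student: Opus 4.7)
The plan is the standard small-data framework: build a local strong solution in $H^3$, close a uniform-in-time a priori estimate to propagate it globally via the continuity method, and then extract exponential decay of the dissipative components from the integrated energy identity and the Poincar\'e inequality on the torus. Concretely, a local solution in the class $(\rho-\bar\rho, m) \in C([0,T]; H^3)$, $(\u, \vartheta-\bar\vartheta) \in C([0,T]; H^3) \cap L^2(0,T; H^4)$ with the positivity $c_0 \le \rho,\vartheta \le c_0^{-1}$ preserved is produced by a standard linearization/fixed-point iteration; the bootstrap target is
\[
E(t) := \norm{(\rho-\bar\rho,\u,\vartheta-\bar\vartheta,m)(t)}{H^3}^2 \le C\varepsilon^2 \quad \text{for all } t \ge 0.
\]

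The main obstacle is that neither $\rho$ nor $m$ carries any dissipation: linearizing \eqref{mm2} at $(\bar\rho,\mathbf{0},\bar\vartheta,0)$ gives $\partial_t m = 0$ at leading order because the background magnetic field vanishes, and $\sigma := \rho-\bar\rho$ only satisfies a transport-type equation. Following the strategy announced in the abstract, I would introduce auxiliary unknowns that exploit the intrinsic structure of \eqref{mm2}. Since the momentum equation contains only the combined gradient $\nabla(R\rho\vartheta + \tfrac12 m^2)$, the natural effective viscous flux is
\[
F := (\lambda+2\mu)\,\div\u - R(\rho\vartheta - \bar\rho\bar\vartheta) - \tfrac12 m^2;
\]
rewriting the momentum equation via $\Delta\u = \nabla\div\u - \nabla^\perp\mathrm{curl}\,\u$ in 2D gives $\nabla F = \rho(\partial_t\u + \u\cdot\nabla\u) + \mu\nabla^\perp\mathrm{curl}\,\u$, so $\Delta F$ is controlled in $L^2$ by the material derivative of $\u$. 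This provides one extra derivative for $R(\rho\vartheta - \bar\rho\bar\vartheta) + \tfrac12 m^2$ beyond what the momentum equation alone delivers, and through the pressure--magnetic identity it drives the $H^3$ control of $\sigma$ and $m$. The pure continuity equation for $m$ then closes at top order through $\tfrac{d}{dt}\norm{m}{H^3}^2 \lesssim \norm{\nabla\u}{L^\infty}\norm{m}{H^3}^2 + (\text{lower-order})$; the Gr\"onwall factor $\int_0^t\norm{\nabla\u}{L^\infty}\,ds$ is controlled by 2D interpolation $\norm{\nabla\u}{L^\infty} \lesssim \norm{\nabla\u}{H^2}^{1/2}\norm{\nabla\u}{L^2}^{1/2}$ combined with the eventual exponential decay of $\u$, so the $H^3$ bound for $(\sigma,m)$ and the decay of $(\u,\vartheta-\bar\vartheta)$ must be closed simultaneously in the same bootstrap. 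Kato--Ponce commutator estimates and 2D Gagliardo--Nirenberg inequalities handle all nonlinear terms, and the continuity argument then propagates $E(t)\le C\varepsilon^2$ for all $t\ge 0$.

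For the exponential decay, the conservation laws \eqref{conserint2}--\eqref{conserint3} are decisive: from $\bar\rho\int_{\T^2}\u\,dx = -\int_{\T^2}\sigma\u\,dx$ (and the analogous identity from \eqref{conserint3} for $\vartheta-\bar\vartheta$) the spatial averages of $\u$ and $\vartheta-\bar\vartheta$ are quadratic in the energy, so Poincar\'e on $\T^2$ applies to both quantities up to a self-correcting small error. The heat-type dissipation $\norm{\nabla\u}{H^3}^2 + \norm{\nabla(\vartheta-\bar\vartheta)}{H^3}^2$ then dominates $\norm{\u}{H^3}^2 + \norm{\vartheta-\bar\vartheta}{H^3}^2$ themselves, and after absorbing all cross terms involving $(\sigma,m)$ against the uniform smallness, one arrives at
\[
\frac{d}{dt}\bigl(\norm{\u}{H^3}^2 + \norm{\vartheta-\bar\vartheta}{H^3}^2\bigr) + c\bigl(\norm{\u}{H^3}^2 + \norm{\vartheta-\bar\vartheta}{H^3}^2\bigr) \le 0,
\]
from which Gr\"onwall yields the advertised exponential rate. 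The decisive technical input throughout is the choice of the new unknowns: without $F$ and the algebraic identities it encodes, the pressure--magnetic nonlinearity $\nabla(R\rho\vartheta+\tfrac12 m^2)$ forces an uncancellable derivative loss in the top-order estimate of $\sigma$ and $m$, which is exactly the step that breaks in the absence of a positive background magnetic field.
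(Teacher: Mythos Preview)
Your overall architecture (local solution, bootstrap on the $H^3$ energy, Gr\"onwall on the transport equations for the non-dissipative quantities once time-integrability of $\|\nabla\u\|_{L^\infty}$ is secured, Poincar\'e via the conservation laws) matches the paper. However, the two concrete mechanisms you propose differ from the paper's, and one of them does not close.

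\textbf{The decay step has a genuine gap.} You claim that after ``absorbing all cross terms involving $(\sigma,m)$ against the uniform smallness'' one obtains a closed inequality for $\|\u\|_{H^3}^2+\|\vartheta-\bar\vartheta\|_{H^3}^2$ alone. This fails because the pressure contributes a \emph{linear} coupling: with $a=\rho-1$, the momentum equation carries $R\nabla a$, and in the $\u$-energy this produces $\int_{\T^2} a\,\div\u\,dx$. Under the ansatz $\|a\|_{L^2}\le\varepsilon$ this term is $O(\varepsilon\|\nabla\u\|_{L^2})$, which is linear in $\u$; any application of Young's inequality leaves a nonzero constant $O(\varepsilon^2)$ on the right-hand side and destroys the differential inequality $\frac{d}{dt}X+cX\le 0$. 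Smallness of $(\sigma,m)$ alone cannot absorb a first-order cross term.

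The paper resolves this by enlarging the decaying energy to include the combined non-dissipative quantity $\varphi=a+\tfrac12 m^2$ (note: no temperature). The $\varphi$-equation $\partial_t\varphi+\div\u=f_3$ supplies exactly the counterterm that cancels $\int\nabla\varphi\cdot\u$, and $\varphi$ is then shown to carry its own dissipation: introducing $\mathbf G=\mathbb Q\u-\tfrac1\nu\Delta^{-1}\nabla\varphi$, one finds that $\mathbf G$ satisfies a genuine heat equation while $\varphi$ obeys the \emph{damped} equation $\partial_t\varphi+\tfrac1\nu\varphi=-\div\mathbf G+f_3$. The closed decay inequality is therefore for $(\varphi,\u,\theta,\mathbf G)$, not for $(\u,\theta)$; the exponential decay of $\u$ and $\theta$ is a corollary. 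Your scalar effective viscous flux $F=(\lambda+2\mu)\div\u-R(\rho\vartheta-\bar\rho\bar\vartheta)-\tfrac12 m^2$ mixes in the dissipative temperature and, as you use it, only yields elliptic regularity for the \emph{sum} $R(\rho\vartheta-\bar\rho\bar\vartheta)+\tfrac12 m^2$; it does not by itself produce a damped evolution for the non-dissipative part, which is what the decay argument needs.

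\textbf{A second point on the top-order closure.} For the uniform $H^3$ bound on $(a,m)$, the dangerous term is $\int_{\T^2} a\,\Lambda^3\div\u\cdot\Lambda^3 a\,dx$, which at face value requires $\|\div\u\|_{H^3}$ and loses a derivative. The paper handles this by substituting $\div\u=-\tfrac{\partial_t a+\u\cdot\nabla a}{1+a}$ from the continuity equation and extracting a total time derivative $\tfrac{d}{dt}\int\tfrac{a}{1+a}(\Lambda^3 a)^2$; this is the step that replaces the need for a positive background magnetic field. Your proposal does not address this term, and the effective-viscous-flux identity $\nabla F=\rho\dot\u+\mu\nabla^\perp\mathrm{curl}\,\u$ does not obviously circumvent it at the $H^3$ level for $a$ separately.
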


\bigskip

\vskip .3in
\section{Preliminaries}

This section provides  several functional inequalities to be used in the proof of our main result.
We first recall a weighted Poincar\'e inequality first established by Desvillettes and Villani in \cite{DV05}.
\begin{lemma}\label{lem-Poi}
	Let $\Omega$  be a bounded connected Lipschitz domain and $\bar{\varrho}$ be a positive constant.  There exists a positive constant $C$, depending on $\Omega$ and $\bar{\varrho}$, such that  for any nonnegative function $\varrho$ satisfying
	\begin{align*}
		\int_{\Omega}\varrho dx=1, \quad \varrho\le\bar{\varrho},
	\end{align*}
	and any $\u\in H^1(\Omega)$, there holds
	\begin{align}\label{wPoi}
		\int_{\Omega}\varrho \left(\u-\int_{\Omega}\varrho \u \,dx\right)^2\,dx\le C\|\nabla \u\|_{L^2}^2.
	\end{align}
\end{lemma}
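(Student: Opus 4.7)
The strategy is to reduce this weighted inequality to the classical (unweighted) Poincar\'e--Wirtinger inequality on the Lipschitz domain $\Omega$ via a simple variational observation together with the $L^\infty$ bound on $\varrho$. The key fact is that, because $\int_\Omega \varrho \,dx = 1$, expanding the square gives
\begin{align*}
\int_\Omega \varrho(\u-c)^2 \, dx \;=\; \int_\Omega \varrho \u^2 \, dx \;-\; 2c \int_\Omega \varrho \u \, dx \;+\; c^2,
\end{align*}
a quadratic in the real parameter $c$ that is uniquely minimized at $c = \bar{\u}_\varrho := \int_\Omega \varrho \u \, dx$. In other words, the weighted mean is the best constant approximation of $\u$ in the weighted $L^2$-norm.

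Hence, denoting the ordinary mean $\u_\Omega := \frac{1}{|\Omega|}\int_\Omega \u \, dx$, and inserting the suboptimal choice $c = \u_\Omega$, we obtain
\begin{align*}
\int_\Omega \varrho \bigl(\u - \bar{\u}_\varrho\bigr)^2 \, dx \;\le\; \int_\Omega \varrho \bigl(\u - \u_\Omega\bigr)^2 \, dx \;\le\; \bar{\varrho} \int_\Omega \bigl(\u - \u_\Omega\bigr)^2 \, dx,
\end{align*}
where the second step uses the pointwise bound $\varrho \le \bar{\varrho}$. Applying the standard Poincar\'e--Wirtinger inequality on the bounded connected Lipschitz domain $\Omega$ to the right-hand side concludes the argument with the explicit constant $C = \bar{\varrho}\, C_P(\Omega)$.

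\textbf{Main difficulty.} Honestly, there is no real obstacle here: the whole proof is a one-line reduction built on the variational characterization of the weighted mean combined with the $L^\infty$ control on the weight. The one point worth flagging is that the constant $C$ depends on $\varrho$ \emph{only through the upper bound} $\bar{\varrho}$ -- no lower bound on $\varrho$ is required, so $\varrho$ is permitted to vanish on subsets of $\Omega$. This is precisely the feature that makes the estimate useful in the compressible-fluid context cited in the paper, where the density may degenerate but the normalization $\int \varrho\,dx = 1$ is maintained. Any further quantitative dependence of $C$ on the domain enters solely through the classical Poincar\'e--Wirtinger constant $C_P(\Omega)$, which is controlled by the geometry of $\Omega$.
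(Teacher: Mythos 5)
Your proof is correct and self-contained. Note that the paper does not actually prove Lemma~\ref{lem-Poi}: it only recalls the statement and cites Desvillettes and Villani \cite{DV05}, so there is no in-paper argument to compare against. Your derivation---observing that, because $\int_\Omega\varrho\,dx=1$, the weighted mean $\int_\Omega\varrho\u\,dx$ minimizes $c\mapsto\int_\Omega\varrho(\u-c)^2\,dx$, then substituting the unweighted mean $\u_\Omega$, bounding $\varrho\le\bar\varrho$ pointwise, and invoking the classical Poincar\'e--Wirtinger inequality on the bounded connected Lipschitz domain $\Omega$---is the standard elementary route and yields the constant $C=\bar\varrho\,C_P(\Omega)$ with exactly the dependence claimed. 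You are also right to stress that no lower bound on $\varrho$ is needed; the normalization $\int_\Omega\varrho\,dx=1$ together with the upper bound $\bar\varrho$ suffices, which is precisely what makes the estimate usable when the density may vanish on part of the domain.
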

\vskip .1in
In order to remove the weight function $\varrho$ in  \eqref{wPoi} without resorting to the lower bound of  $\varrho$, we need  another variant of Poincar\'e inequality (see Lemma 3.2 in \cite{F04}).
\begin{lemma}\label{lem2.2}
	Let $\Omega$ be a bounded connected Lipschitz domain  and
	$p>1$ be a constant. Given positive constants $M_0$ and $E_0$, there is a constant $C=C(E_0,M_0)$ such that for any
	non-negative function $\varrho$ satisfying
	$$
	M_0\leq\int_{\Omega}\varrho dx\quad\mbox{and}\quad  \int_{\Omega}\varrho^{p}dx\leq
	E_0,
	$$
	and for any $\u\in H^1(\Omega)$, there holds
	$$
	\|\u\|_{L^2}^2\leq C\left[\|\nabla
	\u\|_{L^2}^2+\left(\int_{\Omega}\varrho|\u|\,dx\right)^2\right].
	$$
\end{lemma}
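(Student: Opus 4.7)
The plan is to argue by contradiction using a standard compactness--reflexivity scheme (in the spirit of Feireisl's treatment of weighted Poincar\'e estimates). If the inequality fails, I would produce, for each $n\in\N$, a nonnegative weight $\varrho_n$ satisfying the two integral constraints and a test function $\u_n\in H^1(\Omega)$ with $\|\u_n\|_{L^2}=1$ such that
\begin{equation*}
\|\nabla\u_n\|_{L^2}\longrightarrow 0\quad\text{and}\quad\int_\Omega\varrho_n|\u_n|\,dx\longrightarrow 0,
\end{equation*}
and then extract subsequential limits to derive a contradiction with $M_0>0$.

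The compactness inputs are two. On the one hand, $\{\u_n\}$ is bounded in $H^1(\Omega)$, so by Rellich--Kondrachov a subsequence converges strongly in $L^q(\Omega)$ for every admissible exponent $q$, and weakly in $H^1$ to a limit $\u_*$; since $\nabla\u_n\to 0$ in $L^2$, one has $\nabla\u_*=0$, which by connectedness of $\Omega$ forces $\u_*\equiv c$ to be a nonzero constant with $|c|^2|\Omega|=1$. On the other hand, $\{\varrho_n\}$ is bounded in $L^p(\Omega)$ and $p>1$, so reflexivity of $L^p$ yields a subsequence converging weakly to some $\varrho_*\ge 0$; testing against $1\in L^{p'}(\Omega)$ preserves the lower bound $\int_\Omega\varrho_*\,dx\ge M_0>0$.

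The decisive step, which I expect to be the main obstacle, is to pass to the limit in the product $\int_\Omega\varrho_n|\u_n|\,dx$. Writing
\begin{equation*}
\int_\Omega\varrho_n|\u_n|\,dx=|c|\int_\Omega\varrho_n\,dx+\int_\Omega\varrho_n\bigl(|\u_n|-|c|\bigr)\,dx,
\end{equation*}
the first term converges to $|c|\int_\Omega\varrho_*\,dx\ge |c|M_0>0$ by the weak $L^p$-limit tested against the constant $1$, while the second is bounded by $\|\varrho_n\|_{L^p}\|\u_n-c\|_{L^{p'}}$ via H\"older's inequality and thus vanishes provided the Rellich convergence $\u_n\to c$ can be upgraded from $L^2$ to the dual exponent $L^{p'}$. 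This is exactly where the assumption $p>1$ and the dimension of $\Omega$ intervene: in the paper's setting $\Omega\subset\R^2$, one has $H^1(\Omega)\hookrightarrow L^q(\Omega)$ compactly for every $q<\infty$, so $L^{p'}$ convergence is available for every $p>1$ and the pairing closes. Combining the two pieces yields $\int_\Omega\varrho_n|\u_n|\,dx\to |c|\int_\Omega\varrho_*\,dx>0$, contradicting the assumed vanishing and proving the lemma.
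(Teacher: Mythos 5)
The paper does not prove this lemma; it cites Lemma 3.2 of Feireisl's monograph \cite{F04} verbatim, so there is no in-paper proof to compare against. Your contradiction-by-compactness scheme is the right framework and yields the inequality in the two-dimensional setting in which the paper applies it. However, the mechanism you use to kill the cross term $\int_\Omega\varrho_n\bigl(|\u_n|-|c|\bigr)\,dx$ — H\"older in conjugate exponents $(p,p')$ together with strong Rellich convergence $\u_n\to c$ in $L^{p'}$ — quietly imposes $p'<\frac{2d}{d-2}$, i.e.\ $p>\frac{2d}{d+2}$, when $\Omega\subset\R^d$ with $d\ge 3$, whereas the lemma assumes only $p>1$. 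You honestly flag this and retreat to $\R^2$, which suffices for this paper, but it means the argument does not recover the lemma's stated generality. The standard dimension-free variant avoids conjugate-exponent pairing altogether: from $\u_n\to c\ne 0$ strongly in $L^2$, hence in measure, the sets $V_n=\{x:|\u_n(x)|\ge|c|/2\}$ satisfy $|\Omega\setminus V_n|\to 0$, and H\"older gives $\int_{\Omega\setminus V_n}\varrho_n\le E_0^{1/p}|\Omega\setminus V_n|^{1/p'}\to 0$; hence $\int_{V_n}\varrho_n\ge M_0/2$ for large $n$, and $\int_\Omega\varrho_n|\u_n|\ge\frac{|c|}{2}\int_{V_n}\varrho_n\ge\frac{|c|M_0}{4}>0$, contradicting the assumed vanishing. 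This uses only equi-integrability of $\{\varrho_n\}$ (the real content of $p>1$) together with convergence in measure of $\{\u_n\}$, is insensitive to the Sobolev exponent, and is essentially how Feireisl closes the argument; substituting it for your H\"older pairing would give a proof at the lemma's stated level of generality.
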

\vskip .1in
\begin{lemma}\label{daishu}{\rm(\cite{kato})}
Let $s\ge 0$, $f,g\in {H^{s}}(\T^2)\cap {L^\infty}(\T^2)$, it holds that
\begin{equation*}
\|fg\|_{H^{s}}\le C(\|f\|_{L^\infty}\|g\|_{H^{s}}+\|g\|_{L^\infty}\|f\|_{H^{s}}).
\end{equation*}
\end{lemma}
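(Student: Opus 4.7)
The plan is to establish this Moser--Kato--Ponce product estimate via Littlewood--Paley analysis on $\T^2$ combined with Bony's paraproduct decomposition. Let $\{\Delta_j\}_{j\ge-1}$ denote the dyadic Littlewood--Paley projectors on $\T^2$ and $S_j=\sum_{k<j}\Delta_k$, so that $\|u\|_{H^s}^2 \simeq \sum_j 2^{2js}\|\Delta_j u\|_{L^2}^2$. I would start by writing Bony's decomposition
$$fg = T_f g + T_g f + R(f,g), \qquad T_fg := \sum_j S_{j-1}f\,\Delta_j g,\qquad R(f,g):=\sum_j \Delta_j f\,\widetilde{\Delta}_j g,$$
and treating each of the three pieces separately in $H^s$.

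For the paraproducts, the spectral support of $S_{j-1}f\,\Delta_j g$ lies in a dyadic annulus of radius $\sim 2^j$, so $\Delta_k(T_fg)$ vanishes unless $|k-j|$ is bounded by an absolute constant $N_0$. H\"older's inequality then gives
$$\|\Delta_k(T_fg)\|_{L^2}\le \sum_{|j-k|\le N_0}\|S_{j-1}f\|_{L^\infty}\|\Delta_j g\|_{L^2}\lesssim \|f\|_{L^\infty}\sum_{|j-k|\le N_0}\|\Delta_j g\|_{L^2};$$
multiplying by $2^{ks}$, taking the $\ell^2$-norm in $k$ and invoking Young's convolution inequality on sequences yields $\|T_fg\|_{H^s}\lesssim \|f\|_{L^\infty}\|g\|_{H^s}$, and symmetrically $\|T_gf\|_{H^s}\lesssim \|g\|_{L^\infty}\|f\|_{H^s}$. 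For the remainder, each $\Delta_j f\,\widetilde{\Delta}_j g$ has frequency support in a ball of radius $\sim 2^j$, hence $\Delta_k R(f,g)=\sum_{j\ge k-N_0}\Delta_k(\Delta_j f\,\widetilde{\Delta}_j g)$; bounding this by $\|\Delta_j f\|_{L^\infty}\|\widetilde{\Delta}_j g\|_{L^2}$ (or the symmetric choice when the other $L^\infty$ factor is cheaper), writing $2^{ks}=2^{js}\cdot 2^{(k-j)s}$ so that the decay factor $2^{(k-j)s}$ is summable in $j\ge k$ for $s\ge 0$, and using $\|\Delta_j f\|_{L^\infty}\lesssim \|f\|_{L^\infty}$, one recovers the stated bound on $R(f,g)$.

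For the integer values of $s$ actually used by the paper (the main argument lives in $H^3$), a purely classical derivation also works and sidesteps Littlewood--Paley entirely. Expanding $\partial^\alpha(fg)$ with $|\alpha|=s$ by the Leibniz rule into terms $\partial^\beta f\,\partial^{\alpha-\beta}g$, one applies H\"older with conjugate exponents $(2s/|\beta|,\,2s/(s-|\beta|))$ and then the Gagliardo--Nirenberg interpolation $\|\nabla^k u\|_{L^{2s/k}}\lesssim \|u\|_{L^\infty}^{1-k/s}\|u\|_{H^s}^{k/s}$ on $\T^2$; Young's inequality then collapses each mixed product into $\|f\|_{L^\infty}\|g\|_{H^s}+\|g\|_{L^\infty}\|f\|_{H^s}$, and the low-order derivative terms (including $\alpha=0$) are trivially controlled by $\|fg\|_{L^2}\le \|f\|_{L^\infty}\|g\|_{L^2}$. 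The main technical point I would flag as the obstacle is the high--high interaction in $R(f,g)$: the decay factor $2^{(k-j)s}$ only closes the $\ell^2$ summation when $s\ge 0$, and genuinely needs $s>0$ to give a geometric series, while the endpoint $s=0$ must be handled by that same trivial bound. Everything else is bookkeeping with frequency localization.
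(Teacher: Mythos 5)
The paper does not prove this lemma; it is stated as a classical Moser-type product estimate and cited directly to Kato \cite{kato}, so there is no internal proof to compare against. Your Bony-decomposition argument is correct and complete: the paraproducts $T_fg$ and $T_gf$ are handled by almost-diagonal frequency localization plus Young's inequality on sequences (which in fact works for all real $s$), and you correctly isolate the genuinely delicate point that the high--high remainder $R(f,g)$ requires $s>0$ for the geometric sum over $j\ge k-N_0$ to converge, the endpoint $s=0$ reducing to the trivial H\"older bound $\|fg\|_{L^2}\le\|f\|_{L^\infty}\|g\|_{L^2}$. Your alternative classical argument for integer $s$ via Leibniz plus Gagliardo--Nirenberg interpolation and Young's inequality is equally standard and valid; the one cosmetic caveat is that on the torus $\T^2$ the homogeneous GN inequality $\|\nabla^k u\|_{L^{2s/k}}\lesssim\|u\|_{L^\infty}^{1-k/s}\|\nabla^s u\|_{L^2}^{k/s}$ should be applied to $u-\bar u$ where $\bar u$ is the mean (and then one uses $\|u-\bar u\|_{L^\infty}\le 2\|u\|_{L^\infty}$), which does not affect the conclusion. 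Either route fully justifies the statement that the paper simply invokes.
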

\vskip .1in
\begin{lemma}\label{jiaohuanzi}{\rm(\cite{kato})}
Let $s> 0$. Then there exists a constant $C$ such that, for any $f\in {H^{s}}(\T^2)\cap W^{1,\infty}(\T^2)$, $g\in {H^{s-1}}(\T^2)\cap {L^\infty}(\T^2)$, there holds
\begin{align*}
\norm{[\la^s,f\cdot\nabla ]g}{L^2}\le C(\norm{\nabla f}{L^\infty}\norm{\la^sg}{L^2}+\norm{\la^s f}{L^2}\norm{\nabla g}{L^\infty}).
\end{align*}
\end{lemma}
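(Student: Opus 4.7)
The plan is to prove this classical Kato--Ponce commutator estimate via Bony's paraproduct decomposition. Since $\la^s$ commutes with partial derivatives, we may factor
$$[\la^s, f\cdot\nabla]g = [\la^s, f]\cdot \nabla g,$$
so, setting $h:=\nabla g$, it suffices to bound $\|[\la^s,f] h\|_{L^2}\lesssim \|\nabla f\|_{L^\infty}\|\la^{s-1}h\|_{L^2} + \|\la^s f\|_{L^2}\|h\|_{L^\infty}$. Applying Bony's decomposition $fh = T_f h + T_h f + R(f,h)$ with $T_f h := \sum_j S_{j-1}f \cdot \Delta_j h$ and $R(f,h) := \sum_{|j-k|\le 1}\Delta_j f\, \Delta_k h$, and using that $\la^s$ commutes with each $\Delta_k$, one rewrites
$$[\la^s, f]h = [\la^s,T_f]h + \bigl(\la^s T_h f - T_{\la^s h}f\bigr) + \bigl(\la^s R(f,h) - R(f,\la^s h)\bigr),$$
and the three pieces are estimated separately in $L^2$.

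\medskip
\noindent The delicate part is the paraproduct commutator $[\la^s,T_f]h=\sum_j [\la^s, S_{j-1}f]\Delta_j h$. For each $j$ the low-frequency factor $S_{j-1}f$ has Fourier support in a ball of radius $\lesssim 2^{j-2}$, while $\Delta_j h$ is frequency-localised at $\sim 2^j$; on this joint support, the mean value theorem applied to the symbol of $\la^s$ yields $\bigl||\xi|^s-|\xi-\eta|^s\bigr|\lesssim 2^{(s-1)j}|\eta|$. A standard Fourier-side computation then gives
$$\bigl\|[\la^s, S_{j-1}f]\Delta_j h\bigr\|_{L^2}\lesssim 2^{(s-1)j}\|\nabla S_{j-1}f\|_{L^\infty}\|\Delta_j h\|_{L^2}\lesssim 2^{(s-1)j}\|\nabla f\|_{L^\infty}\|\Delta_j h\|_{L^2}.$$
Since the summands are spectrally concentrated in annuli of radius $\sim 2^j$, $\ell^2$ almost-orthogonality produces $\|[\la^s, T_f]h\|_{L^2}\lesssim \|\nabla f\|_{L^\infty}\|\la^{s-1}h\|_{L^2}$.

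\medskip
\noindent For the $T_h$ difference, each block $S_{j-1}h\cdot\Delta_j f$ is frequency-localised in an annulus $\sim 2^j$; Bernstein's inequality then gives $\|\la^s(S_{j-1}h\cdot\Delta_j f)\|_{L^2}\lesssim 2^{sj}\|h\|_{L^\infty}\|\Delta_j f\|_{L^2}$, and the parallel bound for $T_{\la^s h}f$ follows from $\|S_{j-1}\la^s h\|_{L^\infty}\lesssim 2^{sj}\|h\|_{L^\infty}$; summing in $j$ contributes $\|h\|_{L^\infty}\|\la^s f\|_{L^2}$. For the remainder, each block $\Delta_j f\cdot\tilde\Delta_j h$ has spectrum in a ball of radius $\lesssim 2^{j+2}$, so Bernstein in balls together with the hypothesis $s>0$ enables the $2^{sj}$-weighted summation needed to produce the same type of bound $\|h\|_{L^\infty}\|\la^s f\|_{L^2}$. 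Restoring $h=\nabla g$ and invoking $\|\la^{s-1}\nabla g\|_{L^2}\simeq\|\la^s g\|_{L^2}$ yields the stated inequality. The genuine difficulty lies only in the paraproduct commutator term; the other two pieces are routine Bernstein estimates.
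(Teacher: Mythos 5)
The paper does not prove Lemma \ref{jiaohuanzi}: it is cited directly from Kato's lecture notes, so there is no in-paper argument to compare against. Your proposal therefore stands alone, and its overall skeleton --- factor $[\la^s,f\cdot\nabla]g=[\la^s,f]\nabla g$, pass to Bony's decomposition of $[\la^s,f]h$, estimate the paraproduct commutator $[\la^s,T_f]h$ by a symbol mean-value bound, and handle the two paraproduct terms $\la^s T_h f$, $T_{\la^s h}f$ by Bernstein --- is a legitimate and standard route to the Kato--Ponce commutator estimate. The analysis of $[\la^s,T_f]h$ and of the $T_h$ pieces is correct.

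The gap is in the remainder, which you dismiss as ``routine Bernstein estimates.'' For $\la^s R(f,h)$ the argument does work: $\Delta_k\la^s R(f,h)$ involves only $j\ge k-3$, and $2^{sk}\|\Delta_k R(f,h)\|_{L^2}\lesssim \|h\|_{L^\infty}\sum_{j\ge k-3}2^{s(k-j)}\,2^{sj}\|\Delta_j f\|_{L^2}$, whose $\ell^2_k$ norm closes by discrete Young's inequality because the one-sided kernel $2^{sn}\mathbf 1_{n\le 3}$ is summable precisely when $s>0$. But for $R(f,\la^s h)$ this mechanism fails: here one needs $\ell^2_k\bigl(\|\Delta_k R(f,\la^s h)\|_{L^2}\bigr)$ \emph{without} the compensating $2^{sk}$ weight, so one is left with $\ell^2_k\bigl(\sum_{j\ge k-3}2^{sj}\|\Delta_j f\|_{L^2}\bigr)$, whose kernel $\mathbf 1_{n\le 3}$ is not $\ell^1$; this is exactly the borderline $s_1+s_2=0$ case of the Bony remainder calculus, where $R:\dot B^{s}_{2,2}\times\dot B^{-s}_{\infty,\infty}\to L^2$ does not hold (the standard endpoint estimate requires $1/r_1+1/r_2\ge1$ and only lands in $B^0_{2,\infty}$). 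So, contrary to the closing remark that the paraproduct commutator is the only delicate term, the remainder $R(f,\la^s h)$ is also genuinely delicate and needs either a commutator/cancellation argument, a Coifman--Meyer bilinear multiplier treatment (as in the original Kato--Ponce proof), or a more careful splitting exploiting both available block bounds; as written, the proposed estimate for this piece does not close.
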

\vskip .1in
\begin{lemma}\label{fuhe}{\rm(\cite{Triebel})}
Let $s>0$ and $f\in H^s(\T^2)\cap L^\infty(\T^2)$. Assume that $F$ is a smooth function  on $\R$ with $F(0)=0$. Then we
have
$$
\|F(f)\|_{H^s}\le C(1+\|f\|_{L^\infty})^{[s]+1}\|f\|_{H^s},
$$
where the constant $C$ depends on
$\sup\limits_{k\le{[s]+2},\, t\le\|f\|_{L^\infty}} \left\|F^{(k)}(t)\right\|_{L^\infty}.$
\end{lemma}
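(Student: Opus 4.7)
Plan: This is a classical Moser-type composition estimate; I would prove it by splitting into integer and fractional parts. For integer $s$ I would use Faà di Bruno's formula combined with Gagliardo-Nirenberg interpolation, and for fractional $s$ I would bootstrap through a Littlewood-Paley telescoping argument.

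\textbf{Step 1 (Integer case).} Assume first $s=k\in\N$. By Faà di Bruno's formula, for any multi-index $\alpha$ with $1\le |\alpha|\le k$,
$$\partial^\alpha F(f)=\sum_{m=1}^{|\alpha|} F^{(m)}(f)\sum_{\substack{\beta_1+\cdots+\beta_m=\alpha\\ |\beta_j|\ge 1}} c_{\alpha,\beta}\prod_{j=1}^m \partial^{\beta_j} f.$$
The Gagliardo-Nirenberg inequality yields
$$\|\partial^{\beta_j} f\|_{L^{2k/|\beta_j|}(\T^2)}\lesssim \|f\|_{L^\infty}^{1-|\beta_j|/k}\|f\|_{H^k}^{|\beta_j|/k},$$
and combined with Hölder's inequality and the constraint $\sum_j|\beta_j|=|\alpha|\le k$ this gives
$$\Big\|\prod_{j=1}^m \partial^{\beta_j} f\Big\|_{L^2}\lesssim \|f\|_{L^\infty}^{m-1}\|f\|_{H^k}.$$
Bounding $|F^{(m)}(f)|\le\sup_{|t|\le\|f\|_{L^\infty}}|F^{(m)}(t)|$, summing over $\alpha$ and $m$, and using $F(0)=0$ together with $|F(f)|\le\|F'\|_{L^\infty}|f|$ for the $L^2$ piece, one arrives at $\|F(f)\|_{H^k}\le C(1+\|f\|_{L^\infty})^{k-1}\|f\|_{H^k}$, which is stronger than the claim for $s=k$.

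\textbf{Step 2 (Fractional case).} For $s=k+\sigma$ with $\sigma\in(0,1)$ I would decompose
$$F(f)=\sum_{j\ge -1} m_j\,\Delta_j f,\qquad m_j=\int_0^1 F'\bigl(S_j f+\tau\Delta_j f\bigr)\,d\tau,$$
via Littlewood-Paley blocks $\Delta_j$ with $S_j=\sum_{q<j}\Delta_q$, and then estimate $\|F(f)\|_{H^s}\simeq \bigl\|(2^{js}\|\Delta_j' F(f)\|_{L^2})_j\bigr\|_{\ell^2}$ through Bony's paraproduct decomposition of each product $m_j\Delta_j f$. The main paraproduct is handled by $\|m_j\|_{L^\infty}\le\sup|F'|$; for the symmetric paraproduct and the remainder I would apply Faà di Bruno to $m_j$ and control the resulting products of $\partial^i S_j f$ and $\partial^i \Delta_j f$ through Bernstein's inequality together with the Gagliardo-Nirenberg estimates from Step 1. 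Summation in $j$ against the $2^{j\sigma}$ weight then recovers both the prefactor $(1+\|f\|_{L^\infty})^{[s]+1}$ and the dependence on derivatives of $F$ up to order $[s]+2$.

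\textbf{Main obstacle.} The delicate point is tracking the exponent $(1+\|f\|_{L^\infty})^{[s]+1}$ for non-integer $s$: one Taylor expansion of $F$ beyond order $[s]$ is needed to gain the additional $\sigma$-regularity, which explains why the constant depends on $F^{(k')}$ for $k'\le[s]+2$ rather than only $k'\le[s]+1$. A careful book-keeping of Hölder exponents in the paraproduct remainders, combined with Bernstein's inequality to absorb the dyadic factors and with the $L^p$-boundedness of $\Delta_j,S_j$ uniformly in $j$, should close the estimate.
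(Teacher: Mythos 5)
This lemma is not proved in the paper: it is quoted directly from Triebel's book \cite{Triebel}, so there is no in-paper argument to compare against. Your sketch is therefore a from-scratch attempt, and the two-step structure you chose (Fa\`a di Bruno plus Gagliardo--Nirenberg for integer $s$, then a telescoping Littlewood--Paley/Meyer-multiplier decomposition for fractional $s$) is the standard route and is the right one.

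Step~1 is essentially complete and correct. For a multi-index $\alpha$ with $|\alpha|=\ell\le k$, the H\"older exponent coming from the blocks $\|\partial^{\beta_j}f\|_{L^{2k/|\beta_j|}}$ is $1/r=\sum_j|\beta_j|/(2k)=\ell/(2k)\le 1/2$, so $r\ge 2$ and the embedding $L^r(\T^2)\hookrightarrow L^2(\T^2)$ (with $|\T^2|=1$) closes the argument; for $\ell=0$ you correctly use $F(0)=0$ and the mean value theorem. The resulting bound with exponent $k-1$ and with derivatives of $F$ only up to order $k=[s]$ is in fact stronger than the lemma's claim in the integer case, which is fine.

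Step~2, however, is a plan rather than a proof, and the gap there is genuine. You write down the telescoping $F(f)=\sum_j m_j\,\Delta_j f$ and then say the main paraproduct is bounded by $\|m_j\|_{L^\infty}$ and the remainders "should close" after Fa\`a di Bruno and Bernstein. But the crux of the Meyer-multiplier argument is precisely the quantitative statement that is left out: for a fixed output frequency $q$, the contributions from the pieces $m_j\Delta_j f$ with $j\ll q$ must be shown to decay geometrically in $q-j$, and this requires differentiating $m_j$ up to some order $M>s$ and exhibiting the bound $\|\nabla^M m_j\|_{L^\infty}\le C(\|f\|_{L^\infty})\,2^{jM}$; none of this is carried out, so neither the convergence of the sum against the $2^{qs}$ weight, nor the stated polynomial growth $(1+\|f\|_{L^\infty})^{[s]+1}$, nor the derivative count $[s]+2$ for $F$ is actually established. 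Your closing heuristic about "one extra Taylor expansion" correctly anticipates why $[s]+2$ derivatives of $F$ enter, but the claim is asserted, not demonstrated. To turn this into a proof you would need to write out the dyadic estimate on $\|\Delta_q(m_j\Delta_j f)\|_{L^2}$ in both regimes $q\le j+N$ and $q>j+N$, verify that the geometric series sums, and track the $\|f\|_{L^\infty}$-dependence through the Fa\`a di Bruno expansion of the derivatives of $m_j$ as in Step~1.
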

At the last part of  this subsection,
we are  devoted to proving the following Poincar\'e type inequality for $\ta=\vartheta-1$.
\begin{lemma}\label{poincareineq}
	Let $(\rho,\u, \vta, m)$ be smooth solutions to \eqref{mm2} satisfying
	\eqref{conserint2}, \eqref{conserint3} and
	\begin{align}\label{jinq0}
		c_0\le\rho, {\vta}\le c_0^{-1},
	\end{align}
	then we have the following estimate
	\begin{align}\label{jinq1}
		\|\ta\|_{L^2}^2\leq& \,
		C\|\nabla\ta\|_{L^2}^2+C\|\nabla\u\|_{L^2}^4+C\|\aa\|_{L^2}^2,
	\end{align}
where $\ta={\vta}-1$
\end{lemma}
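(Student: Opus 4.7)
The plan is to split $\theta$ into its mass-weighted average and a remainder, control the remainder by a weighted Poincar\'e inequality, and control the average by the conservation laws \eqref{conserint2}--\eqref{conserint3}. Set $\tilde\vartheta := \bar\rho^{-1}\int_{\T^2}\rho\vartheta\,dx$. Applying Lemma~\ref{lem-Poi} with the normalized weight $\varrho := \rho/\bar\rho$ (which satisfies $\int_{\T^2}\varrho\,dx = 1$ and $\varrho \le (c_0\bar\rho)^{-1}$ thanks to \eqref{jinq0}) to the scalar function $\vartheta$, and then using the pointwise lower bound $\rho \ge c_0$ to strip the density weight, one obtains $\|\vartheta - \tilde\vartheta\|_{L^2}^2 \le C\|\nabla\theta\|_{L^2}^2$. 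Combined with the triangle inequality $\|\theta\|_{L^2}^2 \le 2\|\vartheta - \tilde\vartheta\|_{L^2}^2 + 2(\tilde\vartheta - 1)^2$ (using $|\T^2|=1$), the problem reduces to bounding $(\tilde\vartheta - 1)^2$.

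From the definition of $\tilde\vartheta$ and the mass conservation \eqref{conserint2},
\[
  \bar\rho(\tilde\vartheta - 1) \;=\; \int_{\T^2}\rho\theta\,dx \;=\; \int_{\T^2}\theta\,dx + \int_{\T^2}\varphi\,\theta\,dx,
\]
where $\varphi := \rho - 1$ is the density fluctuation. Using the energy conservation \eqref{conserint3} to rewrite $\int\rho\theta = \int\rho\vartheta - \bar\rho$, this becomes
\[
  \bar\rho(\tilde\vartheta-1) = (e_0 - \bar\rho) - \tfrac12\!\int_{\T^2}\rho|\u|^2\,dx - \tfrac12\!\int_{\T^2}|m|^2\,dx.
\]
The velocity contribution is straightforward: a second, componentwise application of Lemma~\ref{lem-Poi} to $\u$ --- legitimate since $\int_{\T^2}\rho\u\,dx = 0$ by \eqref{conserint2} --- yields $\int_{\T^2}\rho|\u|^2\,dx \le C\|\nabla\u\|_{L^2}^2$, whose square produces exactly the $\|\nabla\u\|_{L^2}^4$ term in \eqref{jinq1}. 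The cross term $\int\varphi\,\theta\,dx$ is handled by Cauchy--Schwarz and Young, which absorbs $\tfrac12\|\theta\|_{L^2}^2$ on the left and leaves $C\|\varphi\|_{L^2}^2$ on the right.

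The main obstacle is thus to control the two time-persistent pieces of $(\tilde\vartheta - 1)$ arising from \eqref{conserint3}: the initial-energy constant $e_0 - \bar\rho$ and the magnetic integral $\int|m|^2\,dx$. For $e_0 - \bar\rho$ I would use the initial-time identity $e_0 - \bar\rho = \int\rho_0\theta_0\,dx + \tfrac12\int\rho_0|\u_0|^2\,dx + \tfrac12\int|m_0|^2\,dx$ (combining \eqref{conserint0} and \eqref{conserint1}) together with the initial smallness assumption. For $\int|m|^2\,dx$ the key structural observation is that $m$ obeys a continuity-type equation $\partial_t m + \div(m\u) = 0$ of the same form as the mass equation for $\rho$; consequently $m/\rho$ is transported by the flow, and in the small-data regime $\int|m|^2\,dx$ can be related back to $\|\varphi\|_{L^2}^2$ plus initial-data contributions absorbed into $C$. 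This last step is where the specific $2\frac12$-D vertical-field structure of \eqref{mm2} genuinely enters, and is the most delicate ingredient of the argument.
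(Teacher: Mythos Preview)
Your overall architecture matches the paper's: split $\theta$ into its $\rho$-weighted mean and a fluctuation, control the fluctuation by Lemma~\ref{lem-Poi}, and control the mean via the conservation laws. The gap is that you have misread the quantity $\varphi$. In this paper $\varphi = a + \tfrac12 m^2 = (\rho-1) + \tfrac12 m^2$ (see the line just after \eqref{mm3}), \emph{not} $\rho-1$ as you define it.

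With the correct $\varphi$, your ``main obstacle'' evaporates. Under the normalizations $\bar\rho = \bar\vartheta = 1$ (which force $e_0 = 1$), the two pieces you flag as problematic combine into a single term,
\[
(e_0-\bar\rho)-\tfrac12\int_{\T^2}|m|^2\,dx \;=\; -\int_{\T^2}\Big((\rho-1)+\tfrac12 m^2\Big)\,dx \;=\; -\int_{\T^2}\varphi\,dx,
\]
and this is bounded by $\|\varphi\|_{L^2}$ via Cauchy--Schwarz on the unit torus. That is exactly the paper's step \eqref{jinq5}. No initial-data smallness is invoked, no transport of $m/\rho$ is needed, and nothing ``delicate'' about the $2\tfrac12$-D structure enters here; the whole purpose of introducing the combined variable $\varphi=a+\tfrac12 m^2$ is precisely to absorb the magnetic energy into a quantity whose $L^2$-norm already sits on the right-hand side of \eqref{jinq1}. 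Your proposed workarounds would not, as written, deliver the inequality with the stated right-hand side.

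A minor further point: the decomposition $\int\rho\theta = \int\theta + \int(\rho-1)\theta$ and the associated cross-term absorption are superfluous once you pass directly to energy conservation; the paper does not use them.
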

\begin{proof}
 Without loss of generality, we set $ e_0=1$ in \eqref{conserint3}.
   From \eqref{conserint2} and \eqref{conserint3}, one has
\begin{align}
	&\int_{\mathbb T^2} \rho(x) \,dx =1,\quad\int_{\mathbb T^2} \rho(x) \u(x)
	\,dx=0,\label{conser2}\\
	&\int_{\mathbb T^2} \rho \vta \,dx+\frac12\int_{\mathbb T^2} \rho
	|\u|^2dx+\frac12\int_{\mathbb T^2} |m|^2\,dx=1.\label{conser3}
\end{align}
Now, it's easy to deduce from \eqref{jinq0} that
	\begin{align}\label{jinq3}
		\|\ta\|_{L^2}^2
		\le&\,C\|\sqrt\rho\ta\|_{L^2}^2\nn\\
		=&\,C\int_{\mathbb T^2} \rho \Big|\ta-\int_{\mathbb T^2} \rho \ta
		\,dx+\int_{\mathbb T^2} \rho \ta \,dx\Big|^2 \,dx\nn\\
		\le&\, C\int_{\mathbb T^2} \rho \Big|\ta-\int_{\mathbb T^2} \rho \ta
		\,dx\Big|^2 \,dx
		+C\int_{\mathbb T^2} \rho \Big|\int_{\mathbb T^2} \rho \ta \,dx\Big|^2
		\,dx.
	\end{align}
By Lemma \ref{lem-Poi},
	\begin{align}\label{jinq4}
		\int_{\mathbb T^2} \rho \Big|\ta-\int_{\mathbb T^2} \rho \ta
		\,dx\Big|^2 \,dx\le C\|\nabla\ta\|_{L^2}^2.
	\end{align}
 Thanks to \eqref{conser3}, we have
	\begin{align*}
		\int_{\mathbb T^2} \rho \ta \,dx=&\int_{\mathbb T^2} \rho (\vta-1) \,dx\nn\\
=&-\frac12\int_{\mathbb T^2} \rho
		|\u|^2dx-\int_{\mathbb T^2} \rho
		dx-\frac12\int_{\mathbb T^2} |m|^2\,dx+1\nn\\
=&-\frac12\int_{\mathbb T^2} \rho
		|\u|^2dx-\int_{\mathbb T^2} \aa\,dx.
	\end{align*}
Then the last  term in \eqref{jinq3} can be bounded as follows,
	\begin{align}\label{jinq5}
		\int_{\mathbb T^2} \rho \Big|\int_{\mathbb T^2} \rho \ta \,dx\Big|^2
		\,dx
		=& \Big|\int_{\mathbb T^2} \rho \ta
		\,dx\Big|^2\leq\Big|\frac12\|\sqrt\rho\u\|_{L^2}^2+\|\aa\|_{L^2}\Big|^2\nn\\
		\le&C\left(\|\u\|_{L^2}^4+\|\aa\|_{L^2}^2\right).
	\end{align}
Due to $\int_{\mathbb{T}^2}\rho \u \,dx=0,$ one can deduce from Lemma \ref{lem-Poi} that
\begin{align*}
\|(\sqrt{\rho }\u)(t)\|_{L^2}^2\le C\|\nabla \u(t)\|_{L^2}^2,
\end{align*}
which combines with Lemma \ref{lem2.2} furhter implies that
\begin{align}\label{P}
\|\u(t)\|_{L^2}^2\le C\|\nabla \u(t)\|_{L^2}^2.
\end{align}
From \eqref{jinq5}  and \eqref{P}, we get
	\begin{align}\label{jinq8}
		\int_{\mathbb T^2} \rho \Big|\int_{\mathbb T^2} \rho \ta \,dx\Big|^2
		\,dx
		\le C\|\nabla\u\|_{L^2}^4+ C\|\aa\|_{L^2}^2.
	\end{align}
Inserting \eqref{jinq4} and \eqref{jinq8} in \eqref{jinq3}, we obtain
	\eqref{jinq1}.
This completes the proof of Lemma \ref{poincareineq}.
\end{proof}

\vskip .3in
\section{The proof of  Theorem \ref{dingli}}

This section is devoted to proving Theorem \ref{dingli}. The proof is long
and is thus divided into several
subsections for the sake of clarity.

\subsection{Local well-posedness}\label{loc}
Given the initial data $(\rho_0-\bar{\rho},\u_0,\vartheta_0-\bar{\vartheta}, m_0)\in H^{3}(\T^2)$,
 the local well-posedness of \eqref{mm2} could be proven
by using the standard energy method (see, e.g.,  \cite{Kawashima}). Thus, we may assume that there exists $T > 0$  such that the system \eqref{mm2} has a unique solution
$(\rho-\bar{\rho},\u,\vartheta-\bar{\vartheta},m)\in C([0,T];H^{3})$. Moreover,
\begin{align}\label{youjiexing}
\frac12c_0\le\rho(t,x),\vartheta(t,x)\le 2c_0^{-1},\quad\hbox{for any $t\in[0,T]$}.
\end{align}
We use the bootstrapping argument to show that this local solution can be extended into a global one.
The goal is to derive {\itshape a priori} upper bound. To initiate the bootstrapping argument, we make the
ansatz that
\begin{align*}
\sup_{t\in[0,T]}(\norm{\rho-\bar{\rho}}{H^{3}}+\norm{\u}{H^{3}}
     +\norm{\vartheta-\bar{\vartheta}}{H^{3}}+\norm{m}{H^{3}})\le \delta,
\end{align*}
where $0<\delta<1$ obeys requirements to be specified later. In the following subsections we prove that, if the initial norm is taken to be sufficiently small, namely
$$
\|(\rho_0-\bar{\rho}, \u_0,\vartheta_0-\bar{\vartheta}, m_0)\|_{H^{3}} \le \varepsilon,
$$
with sufficiently small $\varepsilon>0$, then
$$
\sup_{t\in[0,T]}(\norm{\rho-\bar{\rho}}{H^{3}}+\norm{\u}{H^{3}}
     +\norm{\vartheta-\bar{\vartheta}}{H^{3}}+\norm{m}{H^{3}})\le \frac{\delta}{2}.
$$
The bootstrapping argument then leads to the desired global bound.

\subsection{Energy estimates for $(\rho-\bar \rho,\u,\vartheta-\bar \vartheta,m)$}
We first show the energy estimates
which contains the dissipation estimate for $\u$ and $\vartheta-\bar \vartheta$ only.
Without loss of generality,  let $\bar \rho=\bar{\vartheta}=1$
 and define
\begin{align*}
a\stackrel{\mathrm{def}}{=}\rho -1,\quad
\ta\stackrel{\mathrm{def}}{=}\vartheta -1.
\end{align*}
Then, we can rewrite \eqref{mm2} into the following form
\begin{eqnarray}\label{mm3}
\left\{\begin{aligned}
&\partial_t a+\div \u+\u\cdot\nabla a+a\,\div \u=0,\\
&\partial_t \u-\div(\bar{\mu}(\rho)\nabla\u)-\nabla(\bar{\lambda}(\rho)\div\u)+\nabla\aa
+\nabla\ta=f_1,\\
&\partial_t\theta-\div(\bar{\kappa}(\rho)\nabla\ta)+ \div \u=f_2,\\
&\partial_t{m}+\u\cdot\nabla {m}+{m}\,\div \u=0,\\
&(a,\u,\ta,m)|_{t=0}=(a_0,\u_0,\ta_0,m_0),
\end{aligned}\right.
\end{eqnarray}
where
$$\aa=a +\frac12 m^2,\quad
\bar{\mu}(\rho)=\frac{\mu}{\rho},\quad
\bar{\lambda}(\rho)=\frac{\lambda+\mu}{\rho},\quad
\bar{\kappa}(\rho)=\frac{\kappa}{\rho},$$
and
\begin{align}\label{f1}
f_1=&-\u\cdot\nabla \u+I(a)\nabla \aa+\mu\nabla I(a)\nabla\u
  +(\lambda+\mu)\nabla I(a)\div\u-\ta\nabla J(a),
\end{align}
\begin{align}\label{f2}
f_2=&- \div (\ta\u)+ \kappa\nabla I(a)\nabla\ta
  +\frac{1}{1+a}\Big(2\mu |D(\u)|^2+\lambda(\div \u)^2\Big),
\end{align}
with $I(a)=\frac{a}{1+a}$ and $J(a)=\mathrm{ln}(1+a).$
\vskip .1in
In this subsection, we shall prove the following crucial lemma.
\begin{lemma}\label{gaojie}
Let $(a,\u,\ta,m) \in C([0, T];H^3)$ be a solution of \eqref{mm3}, then there holds
\begin{align}\label{gaojie1}
&\frac12\frac{d}{dt}\norm{(a,\u,\ta,m)}{H^{3}}^2
     -\frac{1}{2}\frac{d}{dt}\int_{\T^2}\frac{a}{1+a}(\Lambda^3a)^2dx
      +\mu\norm{\nabla\u}{H^{3}}^2
     +(\lambda+\mu)\norm{\div\u}{H^{3}}^2+\kappa\norm{\nabla\ta}{H^{3}}^2\nn\\
& \quad\leq C\big((1+\norm{a}{H^3}^2)(1+\norm{\u}{H^3})\norm{\u}{H^3}
       +\norm{(\aa,\u,\ta)}{H^3}^2\big)
      \|( a, \u,\ta, m)\|_{H^{3}}^2.
\end{align}
\end{lemma}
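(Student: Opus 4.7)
My plan is to perform an $H^3$ energy estimate on the four equations of \eqref{mm3} by applying $\Lambda^k$ for $k\in\{0,1,2,3\}$, pairing against the corresponding variable in $L^2$, and summing. Since $a$ and $m$ satisfy only transport-type equations, closure depends on systematic cancellations between the pressure/coupling terms in the $\u$ equation and those from the $a$, $\ta$, $m$ equations, together with a weighted multiplier at the top order that is responsible for the correction $-\frac12\frac{d}{dt}\int\frac{a}{1+a}(\Lambda^3 a)^2\,dx$ in \eqref{gaojie1}.

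For each $k$ the three key cancellations, after applying $\Lambda^k$ and pairing, are: (i) $\int\Lambda^k\nabla a\cdot\Lambda^k\u=-\int\Lambda^k a\cdot\Lambda^k\div\u$ cancels the corresponding term from the $a$ equation paired with $\Lambda^k a$; (ii) $\int\Lambda^k\nabla\ta\cdot\Lambda^k\u=-\int\Lambda^k\ta\cdot\Lambda^k\div\u$ cancels the $\div\u$ contribution from the $\ta$ equation; (iii) $\frac12\int\Lambda^k\nabla m^2\cdot\Lambda^k\u$ combines with the $m$ equation paired with $\Lambda^k m$ to absorb the $m\,\div\u$ coupling. The variable-viscosity pieces $-\div(\bar\mu(\rho)\nabla\Lambda^k\u)$ and $-\div(\bar\kappa(\rho)\nabla\Lambda^k\ta)$ then produce the full dissipation norms $\mu\|\nabla\u\|_{H^3}^2+(\lambda+\mu)\|\div\u\|_{H^3}^2+\kappa\|\nabla\ta\|_{H^3}^2$ modulo commutator errors controlled by Lemmas \ref{daishu}--\ref{fuhe}. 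The transport commutators $[\Lambda^k,\u\cdot\nabla]a$, $[\Lambda^k,a]\div\u$, $[\Lambda^k,\u\cdot\nabla]m$, $[\Lambda^k,m]\div\u$ and the nonlinear sources $\Lambda^k f_1$, $\Lambda^k f_2$ from \eqref{f1}--\eqref{f2} are estimated by the same lemmas as products of $H^3$-norms that fit the right-hand side of \eqref{gaojie1}.

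The real difficulty arises at $k=3$: expanding $\Lambda^3(a\,\div\u)=a\,\Lambda^3\div\u+[\Lambda^3,a]\div\u$ produces the term $\int a(\Lambda^3 a)(\Lambda^3\div\u)$, which cannot be absorbed into the dissipation without sacrificing its coefficient. To bypass this, I would rewrite the continuity equation as $\partial_t a+\u\cdot\nabla a+(1+a)\div\u=0$, apply $\Lambda^3$, and pair against $\Lambda^3 a/(1+a)$ rather than $\Lambda^3 a$. The time-derivative and transport contributions combine, via the pointwise identity $\partial_t a+\u\cdot\nabla a=-(1+a)\div\u$, into
\begin{equation*}
\frac12\frac{d}{dt}\int_{\T^2}\frac{(\Lambda^3 a)^2}{1+a}\,dx-\int_{\T^2}\frac{\div\u}{1+a}(\Lambda^3 a)^2\,dx+(\text{commutators}),
\end{equation*}
while the $(1+a)\,\div\u$ factor now produces the clean coupling $\int\Lambda^3 a\cdot\Lambda^3\div\u$ with no $a$ prefactor, so it cancels exactly against the pressure-derivative term from the $\u$ equation at $k=3$. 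The algebraic identity $\int(\Lambda^3 a)^2/(1+a)\,dx=\|\Lambda^3 a\|_{L^2}^2-\int\frac{a}{1+a}(\Lambda^3 a)^2\,dx$ then produces the correction term displayed on the LHS of \eqref{gaojie1}, and the residual $\int\frac{\div\u}{1+a}(\Lambda^3 a)^2\,dx$ is absorbed into the RHS via $\|\div\u\|_{L^\infty}\lesssim\|\u\|_{H^3}$ and the lower bound $\rho\geq c_0/2$ from \eqref{youjiexing}.

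Summing all four orders and placing the full dissipation norms on the left, the remaining terms reduce to the structure of the right-hand side of \eqref{gaojie1} under the ansatz $\|(a,\u,\ta,m)\|_{H^3}\le\delta<1$. The principal obstacle is the Step-2 weighted identity: one has to select the pairing weight $1/(1+a)$ so that every top-order term either cancels algebraically with a contribution from the $\u$ equation or reduces to one of the favourable forms in the statement, with the unique unavoidable artefact being exactly the correction $-\frac12\frac{d}{dt}\int\frac{a}{1+a}(\Lambda^3 a)^2\,dx$ that is absorbed into the modified energy on the left.
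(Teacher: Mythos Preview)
Your proposal is correct and follows essentially the same route as the paper: $H^3$ energy estimates on all four equations with the cancellations (i)--(iii), commutator bounds via Lemmas~\ref{daishu}--\ref{fuhe}, and a weighted top-order estimate on $a$ to generate the correction $-\tfrac12\tfrac{d}{dt}\int\frac{a}{1+a}(\Lambda^3 a)^2\,dx$. The only cosmetic difference is that at $s=3$ the paper pairs the $a$-equation with $\Lambda^3 a$ and then substitutes $\div\u=-\frac{\partial_t a+\u\cdot\nabla a}{1+a}$ into the bad term $\int a(\Lambda^3 a)\Lambda^3\div\u\,dx$, whereas you pair directly with the weighted multiplier $\Lambda^3 a/(1+a)$; both maneuvers are equivalent and produce the same modified energy.
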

\begin{proof}
At first, we can use the standard energy method to get the $L^2$ estimates, here we omit the details. Let $s=1,2,3,$ and $\Lambda=\sqrt{-\Delta}$.
Applying  operator $\la^s$  to the equations of \eqref{mm3} and then taking $L^2$ inner product with $(\la^s a, \la^s\u,\la^s \ta,\la^s m)$ yield
\begin{align}\label{gaojie2}
&\frac12\frac{d}{dt}\norm{(\la^s a,\la^s\u,\la^s\ta,\la^sm)}{L^2}^2
-\int_{\T^2}\la^{s}\div(\bar{\mu}(\rho)\nabla\u)\cdot\la^{s} \u\,dx\nn\\
&\qquad-\int_{\T^2}\la^{s}\nabla(\bar{\lambda}(\rho)\div\u)\cdot\la^{s} \u\,dx-\int_{\T^2}\la^{s}\div(\bar{\kappa}(\rho)\nabla\ta)\cdot\la^{s} \ta\,dx\nn\\
&\quad= -\int_{\T^2}\la^{s} (\u\cdot\nabla a+ a\,\div \u)\cdot\la^s a\,dx
-\int_{\T^2}\la^{s}\div\u\cdot \la^s a\,dx-\int_{\T^2}\la^{s} \nabla\aa\cdot\la^{s} \u\,dx\nn\\
&\qquad
-\int_{\T^2}\la^{s} \nabla\ta\cdot\la^{s} \u\,dx
+\int_{\T^2}\la^{s} f_1\cdot\la^{s} \u\,dx+\int_{\T^2}\la^{s} f_2\cdot\la^{s} \ta\,dx
-\int_{\T^2}\la^{s} \div\u\cdot\la^{s} \ta\,dx\nn\\
&\qquad
-\int_{\T^2}\la^{s} (\u\cdot\nabla m+m\,\div \u)\cdot\la^{s} b\,dx-\int_{\T^2}\la^{s}\div\u\cdot \la^{s}m\,dx.
\end{align}
Due to $$
\nabla\aa=\nabla a+
m\nabla m
$$
and the  cancellations
\begin{align*}
&\int_{\T^2}\la^{s}\div\u\cdot \la^{s}a\,dx+\int_{\T^2}\la^{s}\nabla a\cdot\la^{s}\u\,dx=0,\nn\\
&\int_{\T^2}\la^{s}\div\u\cdot \la^{s}\ta\,dx+\int_{\T^2}\la^{s}\nabla \ta\cdot\la^{s}\u\,dx=0,
\end{align*}
we can further rewrite \eqref{gaojie2} into
\begin{align}\label{gaojie3}
&\frac12\frac{d}{dt}\norm{(\la^s a,\la^s\u,\la^s\ta,\la^sm)}{L^2}^2
-\int_{\T^2}\la^{s}\div(\bar{\mu}(\rho)\nabla\u)\cdot\la^{s} \u\,dx\nn\\
&\qquad-\int_{\T^2}\la^{s}\nabla(\bar{\lambda}(\rho)\div\u)\cdot\la^{s} \u\,dx-\int_{\T^2}\la^{s}\div(\bar{\kappa}(\rho)\nabla\ta)\cdot\la^{s} \ta\,dx\nn\\
&\quad= -\int_{\T^2}\la^{s} (\u\cdot\nabla a+ a\,\div \u)\cdot\la^s a\,dx
-\int_{\T^2}\la^{s} (m\nabla m)\cdot\la^{s} \u\,dx\nn\\
&\qquad
+\int_{\T^2}\la^{s} f_1\cdot\la^{s} \u\,dx+\int_{\T^2}\la^{s} f_2\cdot\la^{s} \ta\,dx-\int_{\T^2}\la^{s} (\u\cdot\nabla m+m\,\div \u)\cdot\la^{s} m\,dx.
\end{align}
The second term on the left-hand side can be written as
\begin{align}\label{gaojie4}
&-\int_{\T^2}\la^{s}\div(\bar{\mu}(\rho)\nabla\u)\cdot\la^{s} \u\,dx\nn\\
&\quad
=\int_{\T^2}\la^{s}(\bar{\mu}(\rho)\nabla\u)\cdot\nabla\la^{s} \u\,dx\nn\\
&\quad=\int_{\T^2}[\la^{s},\,\bar{\mu}(\rho)\nabla]\u\cdot\nabla\la^{s} \u\,dx
        +\int_{\T^2}\bar{\mu}(\rho)\nabla\la^{s}\u\cdot\nabla\la^{s} \u\,dx.
\end{align}
Due to \eqref{youjiexing}, we have for any $t\in[0,T]$ that
\begin{align}\label{gaojie5}
 \int_{\T^2}\bar{\mu}(\rho)\nabla\la^{s}\u\cdot\nabla\la^{s} \u\,dx
 \ge \frac{\mu c_0}{2}\norm{\la^{s+1}\u}{L^{2}}^2.
\end{align}
For the first term on the right-hand side in \eqref{gaojie4}, we first rewrite this term into
\begin{align}\label{gaojie6}
\int_{\T^2}[\la^{s},\bar{\mu}(\rho)\nabla]\u\cdot\nabla\la^{s} \u\,dx
=&\int_{\T^2}[\la^{s},\,(\bar{\mu}(\rho)-\mu+\mu)\nabla]\u\cdot\nabla\la^{s} \u\,dx\nn\\
=&-\int_{\T^2}[\la^{s},\,\mu I(a)\nabla]\u\cdot\nabla\la^{s} \u\,dx.
\end{align}
Bounding nonlinear terms involving composition functions in $\eqref{gaojie6}$ is more elaborate.
Throughout we make the assumption that
\begin{equation}\label{axiao}
\sup_{t\in\R^+,\, x\in\T^2} |a(t,x)|\leq \frac12
\end{equation}
which will enable us to use freely the composition estimate stated in Lemma \ref{fuhe}.
Note that $H^3(\T^2)\hookrightarrow L^\infty(\T^2),$ condition \eqref{axiao} will be ensured by the fact that the constructed solution about $a$ has small norm.
It follows from Lemma \ref{fuhe} that
\begin{equation}\label{eq:smalla}
\|I(a)\|_{H^s}\le C\|a\|_{H^s}\quad\hbox{and}\quad \|J(a)\|_{H^s}\le C\|a\|_{H^s}, \quad\hbox{for any $s>0$}.
\end{equation}
Then, using the fact $H^2(\T^2)\hookrightarrow L^\infty(\T^2)$,
with the aid of Lemmas \ref{jiaohuanzi} and \eqref{eq:smalla}, we have
\begin{align}\label{gaojie9}
&\Big|\int_{\T^2}[\la^{s},\mu I(a)\nabla]\u\cdot\nabla\la^{s} \u\,dx\Big| \nn\\
&\quad \le C\Big(\norm{\nabla I(a) }{L^\infty}\norm{\la^{s} \u}{L^2}
                 +\norm{\nabla \u}{L^\infty}\norm{\la^{s}I(a)}{L^2}\Big)
         \norm{\nabla\la^{s}\u}{L^{2}}\nn\\
&\quad \le C\big(\norm{I(a) }{H^3}\norm{\la^{s} \u}{L^2}
           +\norm{\nabla \u}{L^\infty}\norm{I(a)}{H^3}\big)
         \norm{\nabla\la^{s}\u}{L^{2}}\nn\\
&\quad \le \frac{\mu c_0}{4}\norm{\la^{s+1}\u}{L^{2}}^2+C\norm{ a }{H^3}^2\norm{ \u}{H^3}^2.
\end{align}
Inserting \eqref{gaojie5}, \eqref{gaojie6} and \eqref{gaojie9} into \eqref{gaojie4} leads to
\begin{align}\label{gaojie10}
-\int_{\T^2}\la^{s}\div(\bar{\mu}(\rho)\nabla\u)\cdot\la^{s} \u\,dx
\ge&\frac{\mu c_0}{4}\norm{\la^{s+1}\u}{L^{2}}^2
     -C\norm{a}{H^3}^2\norm{ \u}{H^3}^2.
\end{align}
Similarly, we have
\begin{align}\label{gaojie11}
-\int_{\T^2}\la^{s}\nabla(\bar{\lambda}(\rho)\div\u)\cdot\la^{s} \u\,dx
\ge&\frac{(\lambda+\mu)c_0}{4}\norm{\la^{s}\div\u}{L^{2}}^2
     -C\norm{a}{H^3}^2\norm{ \u}{H^3}^2
\end{align}
and
\begin{align}\label{gaojie12}
-\int_{\T^2}\la^{s}\div(\bar{\kappa}(\rho)\nabla\ta)\cdot\la^{s} \ta\,dx
\ge&\frac{\kappa c_0}{4}\norm{\la^{s+1}\ta}{L^{2}}^2
  -C\norm{a}{H^3}^2\norm{ \ta}{H^3}^2.
\end{align}
Hence, plugging \eqref{gaojie10}-\eqref{gaojie12} into \eqref{gaojie3}, we obtain
\begin{align}\label{gaojie13}
&\frac12\frac{d}{dt}\norm{(\la^s a,\la^s\u,\la^s\ta,\la^sm)}{L^2}^2
+\frac{\mu c_0}{4}\norm{\la^{s+1}\u}{L^2}^2
     +\frac{(\lambda+\mu)c_0}{4}\norm{\la^{s}\mbox{div}\u}{L^2}^2
     +\frac{\kappa c_0}{4}\norm{\la^{s+1}\ta}{L^2}^2  \nn\\
&\quad\leq-\int_{\T^2}\la^{s} (\u\cdot\nabla a+ a\,\div \u)\cdot\la^s a\,dx
-\int_{\T^2}\la^{s} (m\nabla m)\cdot\la^{s} \u\,dx+\int_{\T^2}\la^{s} f_1\cdot\la^{s} \u\,dx\nn\\
&\qquad
+\int_{\T^2}\la^{s} f_2\cdot\la^{s} \ta\,dx
-\int_{\T^2}\la^{s} (\u\cdot\nabla m+m\,\div \u)\cdot\la^{s} m\,dx
+C\big(\norm{ \u}{H^3}^2+\norm{ \ta}{H^3}^2\big)\norm{a}{H^3}^2.
\end{align}
To bound the first terms on the right-hand side of \eqref{gaojie13},
we rewrite
\begin{align}\label{gaojie14}
\int_{\T^2}\la^{s} (\u\cdot\nabla a)\cdot\la^s a\,dx
=&\int_{\T^2} [\la^{s},\,\u\cdot\nabla] a\cdot\la^s a\,dx
+\int_{\T^2}\u\cdot\nabla\la^{s}a\cdot\la^s a\,dx.
\end{align}
By Lemma \ref{jiaohuanzi}, there holds
\begin{align}\label{gaojie15}
\left|\int_{\T^2}[\la^{s},\,\u\cdot\nabla] a\cdot\la^s a\,dx\right|\le& C\norm{[\la^s,\u\cdot\nabla ]a}{L^2}\norm{\la^{s}a}{L^2}\nn\\
\le&C(\norm{\nabla \u}{L^\infty}\norm{\la^{s}a}{L^2}+\norm{\la^{s} \u}{L^2}\norm{\nabla a}{L^\infty})\norm{\la^{s}a}{L^2}\nn\\
\le&C\norm{\u}{H^3}\norm{a}{H^3}^2.
\end{align}
By integration by parts, we have
\begin{align}\label{gaojie16}
\left|\int_{\T^2}\u\cdot\nabla\la^{s}a\cdot\la^s a\,dx\right|\le& C\norm{\nabla \u}{L^\infty}\norm{\la^{s}a}{L^2}^2\le C\norm{\u}{H^3}\norm{a}{H^3}^2.
\end{align}
From \eqref{gaojie14}-\eqref{gaojie16}, we get
\begin{align}\label{gaojie17}
\left|\int_{\T^2}\la^{s} (\u\cdot\nabla a)\cdot\la^s a\,dx\right|
\le&C\norm{\u}{H^3}\norm{a}{H^3}^2.
\end{align}
In the same manner, there holds
\begin{align}\label{gaojie18}
\left|\int_{\T^2}\la^{s} (\u\cdot\nabla m)\cdot\la^{s} m\,dx\right|
\le&C\norm{\u}{H^3}\norm{m}{H^3}^2.
\end{align}
Similarly,
\begin{align}\label{gaojie19}
&\int_{\T^2}\la^{s} (m\,\div \u)\cdot\la^{s} m\,dx
+\int_{\T^2}\la^{s} (m\nabla m)\cdot\la^{s} \u\,dx\nn\\
&\quad=\int_{\T^2}[\la^{s},\, m\div] \u\cdot\la^{s} m\,dx
+\int_{\T^2}[\la^{s},\,m\nabla] m\cdot\la^{s} \u\,dx\nn\\
&\qquad+\int_{\T^2}m\div\la^{s} \u\cdot\la^{s} m\,dx
+\int_{\T^2}m \nabla\la^{s} m\cdot\la^{s} \u\,dx\nn\\
&\quad=\int_{\T^2}[\la^{s},\, m\div] \u\cdot\la^{s} m\,dx
+\int_{\T^2}[\la^{s},\,m\nabla] m\cdot\la^{s} \u\,dx
+\int_{\T^2}m\div (\la^{s}m\la^{s}\u)\,dx\nn\\
&\quad=\int_{\T^2}[\la^{s}, \,m\div] \u\cdot\la^{s} m\,dx
+\int_{\T^2}[\la^{s},\,m\nabla] m\cdot\la^{s} \u\,dx
-\int_{\T^2}\la^{s}m\la^{s}\u\cdot\nabla m\,dx.
\end{align}
It follows from Lemma \ref{jiaohuanzi} that
\begin{align}\label{gaojie20}
\int_{\T^2}[\la^{s}, m\div] \u\cdot\la^{s} m\,dx
\le C\norm{\u}{H^3}\norm{m}{H^3}^2
\end{align}
and
\begin{align}\label{gaojie21}
\int_{\T^2}[\la^{s}, m\nabla] m\cdot\la^{s} \u\,dx
\le C\norm{\u}{H^3}\norm{m}{H^3}^2.
\end{align}
For the last term in \eqref{gaojie19}, there holds
\begin{align}\label{gaojie22}
\int_{\T^2}\la^{s}m\la^{s}\u\cdot\nabla m\,dx
\le& C\norm{\la^{s}m}{L^2}\norm{\la^{s}\u}{L^2}\norm{\nabla m}{L^\infty}\nn\\
\le&C\norm{\u}{H^3}\norm{m}{H^3}^2.
\end{align}
Hence, combining with \eqref{gaojie18}--\eqref{gaojie22}, we can get
\begin{align}\label{gaojie23}
\left|\int_{\T^2}\la^{s} (\u\cdot\nabla m+m\,\div \u)\cdot\la^{s} m\,dx
+\int_{\T^2}\la^{s} (m\nabla m)\cdot\la^{s} \u\,dx\right|
\le&C\norm{\u}{H^3}\norm{m}{H^3}^2.
\end{align}
Next, we have to bound the most trouble term
 \begin{align}\label{gaojie24}
\int_{\T^2}\la^{s}(a\,\div \u)\cdot\la^s a\,dx
 =\int_{\T^2}[\la^{s}, \, a\,\div] \u\cdot\la^s a\,dx-\int_{\T^2}a\,\div \la^{s}\u\cdot\la^s a\,dx.
 \end{align}
 
By Lemma \ref{jiaohuanzi}, we have
\begin{align}\label{gaojie25}
\left|-\int_{\T^2}[\la^{s},\, a\,\div] \u\cdot\la^s a\,dx\right|
\le C\norm{\u}{H^3}\norm{a}{H^3}^2.
\end{align}
Then, we have to deal with the last term on the right hand side of \eqref{gaojie24}. In fact, for $s=0,1,2$, we can bound this term directly as follows
\begin{align}\label{gaojie26}
\left|-\int_{\T^2}a\,\div\la^{s} \u\cdot\la^s a\,dx\right|
\le& C\norm{a}{L^\infty}\norm{\la^{s}\div \u}{L^2}\norm{\la^{s}a}{L^2}\nn\\
\le& C\norm{a}{H^2}\norm{\div\u}{H^2}\norm{a}{H^2}\nn\\
\le& C\norm{\u}{H^3}\norm{a}{H^3}^2.
\end{align}
However, if $s=3$,  the term $\norm{\div\u}{H^3}$  must  appear which   will lose control.
To overcome the difficulty, we  deduce  from the first equation of \eqref{mm3} that
 $$\div\u=-\frac{\partial_t a+\u\cdot\nabla a}{1+a},$$
 from which we have
\begin{align}\label{gaojie27}
-\int_{\T^2}a\la^{{3}} \div\u\cdot\la^{{3}} a\,dx
=&\int_{\T^2}a\la^{{3}} \big(\frac{\partial_t a+\u\cdot\nabla a}{1+a}\big)\cdot\la^{{3}} a\,dx \nn\\
=&\int_{\T^2}a\la^{{3}} \big(\frac{\partial_t a}{1+a}\big)\cdot\la^{{3}} a\,dx +\int_{\T^2}a\la^{{3}} \big(\frac{\u\cdot\nabla a}{1+a}\big)\cdot\la^{{3}} a\,dx\nn\\
\stackrel{\mathrm{def}}{=}&D_1+D_2.
\end{align}
For the first term $D_1$,  there holds
\begin{align}\label{gaojie28}
D_1=&\int_{\T^2}a\la^{{3}} \big(\frac{\partial_t a}{1+a}\big)\cdot\la^{{3}} a\,dx\nn\\
=&
\int_{\T^2}\frac{a}{1+a}\la^{{3}} ({\partial_t a})\cdot\la^{{3}} a\,dx+\int_{\T^2}a\sum_{\ell=0}^2 C_3^\ell\la^{{\ell}}\partial_t a
 \la^{{3-\ell}}\big(\frac{1}{1+a}\big)
 \cdot\la^{{3}} a\,dx
\nn\\
=&\frac{1}{2}\int_{\T^2}\frac{a}{1+a}\partial_t(\la^{{3}}a)^2 \,dx+\int_{\T^2}a\sum_{\ell=0}^2 C_3^\ell\la^{{\ell}}\partial_t a
 \la^{{3-\ell}}\big(\frac{1}{1+a}\big)
\cdot\la^{{3}} a\,dx
\nn\\
=&\frac{1}{2}\frac{d}{dt}\int_{\T^2}\frac{a}{1+a}(\la^{{3}}a)^2 \,dx-\frac{1}{2}\int_{\T^2}\frac{1}{(1+a)^2}\partial_t a(\la^{{3}}a)^2\,dx\nn\\
&+
\int_{\T^2}a\sum_{\ell=0}^2 C_3^\ell\la^{{\ell}}\partial_t a
\la^{{3-\ell}}\big(\frac{1}{1+a}\big)
 \cdot\la^{{3}} a\,dx.
\end{align}
Using the first equation of \eqref{mm3}, we  can bound the second term on the right hand side of \eqref{gaojie28} as
\begin{align}\label{gaojie29}
-\frac{1}{2}\int_{\T^2}\frac{1}{(1+a)^2}\partial_t a(\la^{{3}}a)^2\,dx
=&\frac{1}{2}\int_{\T^2}\frac{1}{(1+a)^2}(\u\cdot\nabla a+a\,\div \u+\div \u)(\la^{{3}}a)^2\,dx\nn\\
\le&C\big((1+\norm{a}{L^\infty})\norm{\nabla \u}{L^\infty}
       +\norm{\nabla a}{L^\infty}\norm{ \u}{L^\infty}\big)\norm{\la^{{3}}a}{L^2}^2\nn\\
\le&C\big(1+\norm{a}{H^3}\big)\norm{\u}{H^3}\norm{a}{H^3}^2.
\end{align}
By the H\"older inequality, the last term in \eqref{gaojie28} can be controlled as
\begin{align*}
&\int_{\T^d}a\sum_{\ell=0}^2 C_3^\ell\nabla^{{\ell}}\partial_ta
 \nabla^{{3-\ell}}\big(\frac{1}{1+a}\big)
 \cdot\nabla^{{3}} a\,dx\\
& \quad\leq C\norm{a}{L^\infty}\norm{\nabla^{3}a}{L^2}
 \Big(\norm{\partial_ta}{L^\infty}\Big\|{\nabla^{{3}}\big(\frac{1}{1+a}\big)}\Big\|_{L^2}\\
 &\qquad
  +\norm{\nabla\partial_ta}{L^3}\Big\|{\nabla^{{2}}\big(\frac{1}{1+a}\big)}\Big\|_{L^6}
   +\norm{\nabla^2\partial_ta}{L^2}\Big\|{\nabla\big(\frac{1}{1+a}\big)}\Big\|_{L^\infty}
  \Big)\\
&\quad\leq C\norm{a}{L^\infty}\norm{\nabla^{3}a}{L^2}\norm{\partial_ta}{H^2}
   \big(\norm{\nabla^{3}a}{L^2}+\norm{\nabla^{2}a}{H^1}+\norm{\nabla a}{H^2}\big)\\
&\quad\leq C\norm{\partial_ta}{H^2}\norm{ a}{H^3}^3.
\end{align*}
Using the fact that $H^2(\T^d)$ is Banach algebra, we get
\begin{align*}
\norm{\partial_t a}{H^2}
\leq C\norm{\u\cdot\nabla a+  a\,\div \u+\div \u}{H^2}
\leq C(1 +\norm{a}{H^3})\norm{\u}{H^3}.
\end{align*}
Hence, we have
\begin{align}\label{gaojie33}
\int_{\T^2}a\sum_{\ell=0}^2 C_3^\ell\la^{{\ell}}\partial_t a
 \la^{{3-\ell}}\big(\frac{1}{1+a}\big)
 \cdot\la^{{3}} a\,dx
 \le C(1+\norm{a}{H^3})\norm{\u}{H^3}\norm{a}{H^3}^3.
\end{align}
Combining  \eqref{gaojie29} with \eqref{gaojie33} gives
\begin{align}\label{gaojie34}
D_1
\le&\frac{1}{2}\frac{d}{dt}\int_{\T^2}\frac{a}{1+a}(\la^{{3}}a)^2 \,dx
+C(1+\norm{a}{H^3}^2)\norm{\u}{H^3}\norm{a}{H^3}^2.
\end{align}
For the term $D_2$, we get that
\begin{align}\label{gaojie35}
D_2=&\int_{\T^2}a\la^{{3}} \big(\frac{\u\cdot\nabla a}{1+a}\big)\cdot\la^{{3}} a\,dx\nn\\
=&\int_{\T^2}\frac{a}{1+a}\la^{{3}}(\u\cdot\nabla a)\cdot\la^{{3}} a\,dx
   +\int_{\T^2}a\sum_{\ell=0}^2 C_3^\ell\la^{{\ell}}
   (\u\cdot\nabla a)\la^{{3-\ell}}\big(\frac{1}{1+a}\big)
    \cdot\la^{{3}} a\,dx\nn\\
=&D_{2,1}+D_{2,2}.
\end{align}
We can use the commutator   to rewrite $D_{2,1}$ into
\begin{align*}
D_{2,1}=&\int_{\T^2}\frac{a}{1+a}
        \left[\la^{{3}},\,\u\cdot\nabla\right] a\cdot \la^{{3}} a\,dx+
\int_{\T^2}\frac{a}{1+a}\u\cdot\nabla\la^{{3}} a\cdot\la^{{3}} a\,dx.
\end{align*}
Thanks to Lemma \ref{jiaohuanzi}, we get
\begin{align*}
\left|\int_{\T^2}\frac{a}{1+a}
      \left[\la^{{3}},\,\u\cdot\nabla\right] a\cdot \la^{{3}} a\,dx\right|
\le& C\norm{\frac{a}{1+a}}{L^\infty}\norm{[\la^3,\u\cdot\nabla ]a}{L^2}\norm{\la^{{3}}a}{L^2}\nn\\
\le&C\big(\norm{\nabla \u}{L^\infty}\norm{\la^{{3}}a}{L^2}
    +\norm{\la^{{3}} \u}{L^2}\norm{\nabla a}{L^\infty}\big)\norm{\la^{{3}}a}{L^2}\nn\\
\le&C\norm{\u}{H^3}\norm{a}{H^3}^2.
\end{align*}
By using the integration by parts, we have
\begin{align*}
\left|
\int_{\T^2}\frac{a}{1+a}\u\cdot\nabla\la^{{3}} a\cdot\la^{{3}} a\,dx\right|\le& C\norm{\div\big(\frac{a\u}{1+a}\big)}{L^\infty}\norm{\la^{{3}}a}{L^2}^2\nn\\
\le&C\big(\norm{\nabla \u}{L^\infty}
     +\norm{ \u}{L^\infty}\norm{\nabla a}{L^\infty}\big)\norm{\la^{{3}}a}{L^2}^2\nn\\
\le&C\big(1+\norm{a}{H^3}\big)\norm{\u}{H^3}\norm{a}{H^3}^2,
\end{align*}
from which we get
\begin{align}\label{gaojie36}
D_{2,1}\le C\big(1+\norm{a}{H^3}\big)\norm{\u}{H^3}\norm{a}{H^3}^2.
\end{align}
Thanks to the H\"older inequality, we can get
\begin{align*}
D_{2,2}=&
\int_{\T^2}a\sum_{\ell=0}^2 C_3^\ell\la^{{\ell}}(\u\cdot\nabla a)\la^{{3-\ell}}\big(\frac{1}{1+a}\big)
\cdot\la^{{3}} a\,dx\nn\\
\leq& C\norm{a}{L^\infty}\norm{\nabla^{3}a}{L^2}
    \Big(\norm{\u\cdot\nabla a}{L^\infty}\Big\|{\nabla^{{3}}
      \big(\frac{1}{a+1}\big)}\Big\|_{L^2}\\
   &\qquad
  +\norm{\nabla(\u\cdot\nabla a)}{L^3}
     \Big\|{\nabla^{{2}}\big(\frac{1}{1+a}\big)}\Big\|_{L^6}
   +\norm{\nabla^2(\u\cdot\nabla a)}{L^2}\Big\|{\nabla\big(\frac{1}{1+a}\big)}\Big\|_{L^\infty}
  \Big)\\
\leq&C\norm{a}{L^\infty}\norm{\nabla^{3}a}{L^2}\norm{\u\cdot\nabla a}{H^2}
   (\norm{\nabla^{3}a}{L^2}+\norm{\nabla^{2}a}{H^1}+\norm{\nabla a}{H^2})\\
\leq&C\norm{\u}{H^3}\norm{ a}{H^3}^4,
\end{align*}
which combines with \eqref{gaojie36} implies that
\begin{align}\label{gaojie37}
D_{2}
\le C(1+\norm{a}{H^3}^2)\norm{\u}{H^3}\norm{a}{H^3}^2.
\end{align}
Inserting \eqref{gaojie34} and \eqref{gaojie37} into \eqref{gaojie27} leads to
\begin{align}\label{gaojie38}
-\int_{\T^2}a\la^{{3}} \div\u\cdot\la^{{3}} a\,dx
\le&\frac{1}{2}\frac{d}{dt}\int_{\T^2}\frac{a}{1+a}(\la^{{3}}a)^2 \,dx+C(1+\norm{a}{H^3}^2)\norm{\u}{H^3}\norm{a}{H^3}^2.
\end{align}

Consequently, taking the estimates \eqref{gaojie25}, \eqref{gaojie26}, and \eqref{gaojie38} into \eqref{gaojie24}, we get
\begin{align}\label{gaojie39}
-\int_{\T^2}\la^{s} (a\div\u)\cdot\la^s a\,dx
\le&\frac{1}{2}\frac{d}{dt}\int_{\T^2}\frac{a}{1+a}(\la^{3}a)^2 \,dx
+C(1+\norm{a}{H^3}^2)\norm{\u}{H^3}\norm{a}{H^3}^2.
\end{align}
 In the following, we bound the terms of $f_1$ in \eqref{gaojie13}. To do this, we  write
\begin{align}\label{gaojie40}
\int_{\T^2}\la^{s} f_1\cdot\la^{s} \u\,dx=A_1+A_2+A_3+A_4+A_5
\end{align}
with
\begin{align*}
&A_1\stackrel{\mathrm{def}}{=}-\int_{\T^2}\la^{s} \big(\u\cdot\nabla \u\big)\cdot\la^{s} \u\,dx,\nn\\
&A_2\stackrel{\mathrm{def}}{=}\int_{\T^2}\la^{s} \big(I(a)\nabla \aa\big)\cdot\la^{s} \u\,dx,\nn\\
&
A_3\stackrel{\mathrm{def}}{=}\int_{\T^2}\la^{s} \big(\mu\nabla I(a)\nabla\u\big)\cdot\la^{s} \u\,dx,\nn\\
&A_4\stackrel{\mathrm{def}}{=}\int_{\T^2}\la^{s}\big( (\lambda+\mu)
      \nabla I(a)\div\u\big)\cdot\la^{s} \u\,dx,\nn\\
&A_5\stackrel{\mathrm{def}}{=}-\int_{\T^2}\la^{s} \big(\ta\nabla J(a)\big)\cdot\la^{s} \u\,dx.
\end{align*}
The term $A_1$ can be bounded as in \eqref{gaojie17} to get
\begin{align*}
\big|A_1\big|\le C\norm{\u}{H^3}^3.
\end{align*}
By Lemmas \ref{daishu} and (\ref{eq:smalla}), we have
\begin{align*}
\big|A_2\big|
\leq&C\Big(\|\nabla \aa\|_{L^{\infty}}\|I(a)\|_{H^{s-1}}
        +\|\nabla \aa\|_{H^{s-1}}\|I(a)\|_{L^{\infty}}\Big)\norm{\la^{s+1}\u}{L^2}\nonumber\\
\leq&\frac{\mu c_0}{64}\norm{\la^{s+1}\u}{L^2}^2
         +C\norm{a}{H^{s-1}}^2\norm{\aa}{H^3}^2
         +C\|I(a)\|_{H^{2}}^2\norm{\aa}{H^s}^2\nn\\
\leq&\frac{\mu c_0}{64}\norm{\la^{s+1}\u}{L^2}^2
         +C\norm{a}{H^{3}}^2\norm{\aa}{H^3}^2.
\end{align*}
Similarly,
\begin{align*}
\big|A_3\big|+\big|A_4\big|
\leq&C\big(\|\nabla I(a)\|_{L^{\infty}}\norm{\la^{s}\u}{L^2}
        +\|\nabla I(a)\|_{H^{s-1}}\|\nabla \u\|_{L^{\infty}}\big)\norm{\la^{s+1}\u}{L^2}\nonumber\\
\leq&\frac{\mu{c_0}}{64}\norm{\la^{s+1}\u}{L^2}^2
   +C\norm{a}{H^3}^2\norm{ \u}{H^3}^2
\end{align*}
and
\begin{align*}
\big|A_5\big|
\leq&C\big(\|\nabla J(a)\|_{L^{\infty}}\|\ta\|_{H^{s-1}}
        +\|\nabla J(a)\|_{H^{s-1}}\|\ta\|_{L^{\infty}}\big)\norm{\la^{s+1}\u}{L^2}\nonumber\\
         \leq&\frac{\mu{c_0}}{64}\norm{\la^{s+1}\u}{L^2}^2+C\norm{a}{H^3}^2
         \norm{\ta}{H^3}^2.
\end{align*}
Inserting the bounds for $A_1$ through $A_5$ into \eqref{gaojie40}, we get
\begin{align}\label{gaojie41}
\left|\int_{\T^2}\la^{s} f_1\cdot\la^{s} \u\,dx\right|
\leq\frac{3\mu{c_0}}{64}\norm{\la^{s+1}\u}{L^2}^2
+C\norm{\u}{H^3}^3
+C\norm{(\aa,\u,\ta)}{H^3}^2\norm{a}{H^3}^2.
\end{align}
Finally, we have to bound the terms in $f_2$. we first rewrite
\begin{align*}
\int_{\T^2}\la^{s} f_2\cdot\la^{s} \ta\,dx=A_{11}+A_{12}+A_{13}
\end{align*}
with
\begin{align*}
&A_{11}=-\int_{\T^2}\la^{s} \div (\ta\u)\cdot\la^{s} \ta\,dx,\\
&A_{12}=\kappa\int_{\T^2}\la^{s} (\nabla I(a)\nabla\ta)\cdot\la^{s} \ta\,dx,\\
&A_{13}=\int_{\T^2}\la^{s} \big(\frac{1}{1+a}(2\mu |D(\u)|^2+\lambda(\div \u)^2)\big)
     \cdot\la^{s} \ta\,dx.
\end{align*}
Due to Lemma \ref{daishu}, we obtain
\begin{align*}
\int_{\T^2}\la^{s} (\ta\div\u )\cdot\la^s \ta\,dx
\leq&C\norm{\ta\div\u}{H^s}\norm{\ta}{H^s}\nn\\
\leq&C\left(\norm{\ta}{L^\infty}\norm{\div\u}{H^s}
         +\norm{\div\u}{L^\infty}\norm{\ta}{H^s}\right)\norm{\ta}{H^s}\nn\\
\leq&C\norm{\la^{s+1}\u}{L^2}\norm{\ta}{H^3}^2+\norm{\u}{H^3}\norm{\ta}{H^3}^2\nn\\
\le&\frac{\mu c_0}{16}\norm{\la^{s+1}\u}{L^2}^2+C\big(\norm{\u}{H^3}+\norm{\ta}{H^3}^2\big)\norm{\ta}{H^3}^2,
\end{align*}
where we have used the inequality
\begin{align*}
\|\div\u\|_{H^{s}}\le C\|\la^{s+1}\u\|_{L^{2}}.
\end{align*}
By a similar derivation of \eqref{gaojie17}, there holds
\begin{align*}
\int_{\T^2}\la^{s} (\u\cdot\nabla\ta)\cdot\la^s \ta\,dx
\le C\norm{\u}{H^3}\norm{\ta}{H^3}^2.
\end{align*}
Then we get the bound of $A_{11}$
\begin{align*}
|A_{11}|\leq&\left|\int_{\T^2}\la^{s} (\ta\div\u )\cdot\la^s \ta\,dx\right|
    +\left|\int_{\T^2}\la^{s} (\u\cdot\nabla\ta)\cdot\la^s \ta\,dx\right|\nn\\
\le&\frac{\mu c_0}{16}\norm{\la^{s+1}\u}{L^2}^2
       +C\big(\norm{\u}{H^3}+\norm{\ta}{H^3}^2\big)\norm{\ta}{H^3}^2.
\end{align*}
It follows from H\"{o}lder inequality, Lemma \ref{daishu} and Young's inequality that
\begin{align*}
|A_{12}|=&\left|\kappa\int_{\T^2}\la^{s} (\nabla I(a)\nabla\ta )\cdot\la^s \ta\,dx\right|\nn\\
\leq&C\big(\|\nabla I(a)\|_{L^{\infty}}\norm{\la^{s}\ta}{L^2}
        +\|\nabla I(a)\|_{H^{s-1}}\|\nabla \ta\|_{L^{\infty}}\big)\norm{\la^{s+1}\ta}{L^2}\nonumber\\
\leq&\frac{\kappa c_0}{32}\norm{\la^{s+1}\ta}{L^2}^2+C\big(\|\nabla a\|_{L^{\infty}}^2\norm{\la^{s}\ta}{L^2}^2
        +\norm{a}{H^s}^2\norm{ \ta}{H^3}^2\big)\nn\\
\leq& \frac{\kappa c_0}{32}\norm{\la^{s+1}\ta}{L^2}^2  +C\norm{a}{H^3}^2
         \norm{\ta}{H^3}^2.
\end{align*}
For the bound of $A_{13}$, it follows from the fact $H^2(\T^2)$ is Banach algebra that
\begin{align*}
|A_{13}|=&\left|\int_{\T^2}\la^{s} \big(\frac{1}{1+a}(2\mu |D(\u)|^2
          +\lambda(\div \u)^2)\big)\cdot\la^s \ta\,dx\right|\nonumber\\
\leq& C\norm{\frac{1}{1+a}(2\mu |D(\u)|^2
        +\lambda(\div \u)^2)}{H^2}
        \norm{\la^{s+1}\ta}{L^2}\nonumber\\
\leq& C\big(1+\norm{I(a)}{L^\infty}\big)
        \big(\norm{ |D(\u)|^2}{H^2}
        +\norm{(\div \u)^2}{H^2}\big)\norm{\la^{s+1}\ta}{L^2}\nn\\
\leq&\frac{\kappa c_0}{32}\norm{\la^{s+1}\ta}{L^2}^2+C(1+\norm{a}{H^3}^2)\norm{\u}{H^3}^4.
\end{align*}
Hence, we have
\begin{align}\label{gaojie43}
\left|\int_{\T^2}\la^{s} f_2\cdot\la^{s} \ta\,dx\right|\leq&|A_{11}|+|A_{12}|+|A_{13}|
\leq\frac{\mu c_0}{16}\norm{\la^{s+1}\u}{L^2}^2+\frac{\kappa c_0}{16}\norm{\la^{s+1}\ta}{L^2}^2\nn\\
     &+ C\big(\norm{\u}{H^3}+\norm{a}{H^3}^2+\norm{\ta}{H^3}^2\big)\norm{\ta}{H^3}^2
       +C\big(1+\norm{a}{H^3}^2\big)\norm{\u}{H^3}^4.
\end{align}
Finally, inserting \eqref{gaojie17}, \eqref{gaojie23}, \eqref{gaojie39},
   \eqref{gaojie41} and \eqref{gaojie43} into \eqref{gaojie13}
and then summing up  \eqref{gaojie13} over $1\leq s\leq3$, we obtain
\begin{align*}
&\frac12\frac{d}{dt}\norm{(a,\u,\ta,m)}{H^{3}}^2
     -\frac{1}{2}\frac{d}{dt}\int_{\T^2}\frac{a}{1+a}(\Lambda^3a)^2dx
      +\mu\norm{\nabla\u}{H^{3}}^2
     +(\lambda+\mu)\norm{\div\u}{H^{3}}^2+\kappa\norm{\nabla\ta}{H^{3}}^2\nn\\
& \quad\leq C\big((1+\norm{a}{H^3}^2)(1+\norm{\u}{H^3})\norm{\u}{H^3}
       +\norm{(\aa,\u,\ta)}{H^3}^2\big)
      \|( a, \u,\ta, m)\|_{H^{3}}^2.
\end{align*}
Consequently, we prove Lemma \ref{gaojie}.
\end{proof}

\subsection{Energy estimates for $(\aa,\u, \ta)$ }
In this subsection, we shall give the energy estimates for the unknown good function $\aa$,
We need to reformulate \eqref{mm3} in terms of variables $\aa$, $\u$ and $\ta$.
Precisely, one has
\begin{eqnarray}\label{energy1}
\left\{\begin{aligned}
&\partial_t \aa+ \div\u  =f_3,\\
&\partial_t \u-\div(\bar{\mu}(\rho)\nabla\u)-\nabla(\bar{\lambda}(\rho)\div\u)+\nabla \aa
+\nabla\ta=f_1,\\
&\partial_t\theta-\div(\bar{\kappa}(\rho)\nabla\ta)
+ \div \u=f_2,\\
&(\aa,\u,\ta)|_{t=0}=(\aa_0,\u_0,\ta_0),
\end{aligned}\right.
\end{eqnarray}
with
\begin{align*}
&\aa \stackrel{\mathrm{def}}{=}a+\frac{1}{2}m^2,\\
&f_3\stackrel{\mathrm{def}}{=}-\u\cdot\nabla \aa-\aa\div \u-\frac{1}{2}m^2\div\u,
\end{align*}
and $f_1$ is defined in \eqref{f1}, $f_2$ is defined in \eqref{f2}.
 Then, we will present the energy estimates for $(\aa,\u, \ta)$ in the following Lemma.
\begin{lemma}\label{haosan}
Let $(\aa,\u,\ta) \in C([0, T];H^3)$ be a solution to the  system \eqref{energy1}, then there holds
\begin{align}\label{energy2}
&\frac12\frac{d}{dt}\norm{(\aa,\u,\ta)}{H^{3}}^2+\mu\norm{\nabla\u}{H^{3}}^2
+(\lambda+\mu)\norm{\div\u}{H^{3}}^2+\kappa\norm{\nabla\ta}{H^{3}}^2\\
& \quad\leq C\big(\norm{(a,\u)}{H^3}
       +(1+\norm{a}{H^3}^2)\norm{\u}{H^3}^2
       +\norm{m}{H^3}^4 +\norm{(a,\aa,\ta)}{H^3}^2\big)
      \|(\aa, \u,\ta)\|_{H^{3}}^2.\nn
\end{align}
\end{lemma}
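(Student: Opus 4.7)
The plan is to mimic the energy argument of Lemma \ref{gaojie} but to replace the density variable $a$ by the combined good unknown $\aa = a + \tfrac12 m^2$, whose evolution equation $\partial_t \aa + \div\u = f_3$ has exactly the same linear structure as the $a$-equation, which lets the usual density--velocity cancellation fire. For each $s\in\{0,1,2,3\}$ I would apply $\la^s$ to the three equations in \eqref{energy1} and take the $L^2$ inner product with $(\la^s\aa,\la^s\u,\la^s\ta)$. The pairs $\int \la^s\div\u \cdot \la^s\aa\,dx + \int \la^s\nabla\aa\cdot\la^s\u\,dx = 0$ and the analogous one for $\ta$ cancel, so that the only surviving linear contributions are the viscous ones. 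These are treated exactly as in the proof of Lemma \ref{gaojie}: decomposing $\bar\mu(\rho)=\mu - \mu I(a)$ etc., the leading parts yield the dissipation $\mu\|\nabla\u\|_{H^3}^2 + (\lambda+\mu)\|\div\u\|_{H^3}^2 + \kappa\|\nabla\ta\|_{H^3}^2$, while the commutators with $I(a)$ are controlled via Lemmas \ref{jiaohuanzi} and \ref{fuhe} by $C\|a\|_{H^3}^2\|(\u,\ta)\|_{H^3}^2$.

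Next I would treat the nonlinear forcing terms. The contributions of $f_1$ and $f_2$ to $\int \la^s f_1 \cdot \la^s\u\,dx$ and $\int \la^s f_2 \cdot \la^s\ta\,dx$ are textually identical to those already bounded in Lemma \ref{gaojie} (cf.\ \eqref{gaojie41} and \eqref{gaojie43}); reusing those estimates produces the $\|(\aa,\u,\ta)\|_{H^3}^2\|a\|_{H^3}^2$, $(1+\|a\|_{H^3}^2)\|\u\|_{H^3}^4$, and $\|\u\|_{H^3}^3$ pieces on the right-hand side, up to fractions of the dissipation that can be absorbed into the left.

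The genuinely new work is to estimate $\int \la^s f_3\cdot \la^s\aa\,dx$ with $f_3 = -\u\cdot\nabla\aa - \aa\,\div\u - \tfrac12 m^2\div\u$. The transport term $\u\cdot\nabla\aa$ is handled by a commutator and integration by parts (as in \eqref{gaojie14}--\eqref{gaojie17}), yielding $C\|\u\|_{H^3}\|\aa\|_{H^3}^2$. For $\aa\,\div\u$, Lemma \ref{daishu} gives $\|\aa\,\div\u\|_{H^3}\le C\|\aa\|_{L^\infty}\|\div\u\|_{H^3} + C\|\aa\|_{H^3}\|\div\u\|_{L^\infty}$; pairing with $\la^s\aa$ and applying Young's inequality to the top-order factor $\|\aa\|_{H^3}\|\div\u\|_{H^3}$ produces $\varepsilon\|\div\u\|_{H^3}^2 + C\|\aa\|_{H^3}^2\|\aa\|_{H^3}^2$, with the $\varepsilon$ piece absorbed into the dissipation. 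The term $m^2\div\u$ is handled identically using $\|m^2\|_{H^3}\le C\|m\|_{H^3}\|m\|_{L^\infty}\le C\|m\|_{H^3}^2$, which delivers precisely the $\|m\|_{H^3}^4\|\aa\|_{H^3}^2$ contribution appearing in \eqref{energy2}.

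The main obstacle is the complete absence of dissipation for $\aa$: when $\la^3$ falls on $\div\u$ inside $\aa\,\div\u$, the resulting $\la^4\u$ must be borrowed from the viscosity dissipation in the $\u$-equation, and this is only legitimate because $\aa$ itself carries a smallness factor (either through $\|a\|_{H^3}$ or through $\|m\|_{H^3}^2$). Unlike in Lemma \ref{gaojie}, no auxiliary weighted quantity such as $\int \frac{a}{1+a}(\la^3 a)^2\,dx$ needs to be introduced here, because the absence of a linear $\div\u$ with a non-vanishing coefficient of order one (it is already subsumed by the $\aa$-factor) makes a direct Young's inequality sufficient. Finally, summing the resulting inequalities over $0\le s\le 3$ and collecting the error terms yields \eqref{energy2}.
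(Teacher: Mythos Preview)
Your proposal is correct and follows essentially the same route as the paper's proof. In particular, the paper also reuses the viscous-term estimates \eqref{gaojie10}--\eqref{gaojie12} and the bounds \eqref{gaojie41}, \eqref{gaojie43} for $f_1,f_2$ verbatim, and handles the three pieces of $f_3$ exactly as you describe (commutator plus integration by parts for $\u\cdot\nabla\aa$, and Lemma~\ref{daishu} with Young's inequality for $\aa\,\div\u$ and $m^2\div\u$); your observation that no weighted auxiliary quantity is needed here---because the top-order $\la^4\u$ always comes paired with a small factor $\|\aa\|_{L^\infty}$ or $\|m\|_{L^\infty}^2$---is precisely the point that distinguishes this lemma from Lemma~\ref{gaojie}.
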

\begin{proof}
We start with the $L^2$ estimate.
Taking inner product with $({\aa},\u,\ta)$ for the equation \eqref{energy1},  we obtain
\begin{align}\label{energy3}
&\frac12\frac{d}{dt}\norm{(\aa,\u,\ta)}{L^2}^2-\int_{\T^2}\div(\bar{\mu}(\rho)\nabla\u)\cdot \u\,dx-\int_{\T^2}\nabla(\bar{\lambda}(\rho)\div\u)\cdot \u\,dx-\int_{\T^2}\div(\bar{\kappa}(\rho)\nabla\ta)\cdot \ta\,dx\nn\\
&\quad= \int_{\T^2} f_3\cdot \aa\,dx+\int_{\T^2} f_1\cdot \u\,dx+\int_{\T^2} f_2\cdot \ta\,dx,
\end{align}
where we have used the following cancellations
\begin{align*}
\int_{\T^2}\div\u\cdot \aa\,dx+\int_{\T^2}\nabla \aa\cdot\u\,dx
=\int_{\T^2}\div\u\cdot \ta\,dx+\int_{\T^2}\nabla \ta\cdot\u\,dx=0.
\end{align*}
For the last three terms on the left hand side of \eqref{energy3}, we get by integration by parts  and \eqref{youjiexing}  that
\begin{align}
-\int_{\T^2}\div(\bar{\mu}(\rho)\nabla\u)\cdot \u\,dx
=&\int_{\T^2}\bar{\mu}(\rho)\nabla\u\cdot \nabla\u\,dx
\ge \frac{\mu c_0} {2}\norm{\nabla\u}{L^{2}}^2,\label{energy4}\\
-\int_{\T^2}\nabla(\bar{\lambda}(\rho)\div\u)\cdot \u\,dx
=&\int_{\T^2}\bar{\lambda}(\rho)\div\u\cdot \div\u\,dx
  \ge \frac{\left(\lambda+\mu\right) c_0} {2}\norm{\div\u}{L^{2}}^2,\label{energy5}
\end{align}
and
\begin{align}
-\int_{\T^2}\div(\bar{\kappa}(\rho)\nabla\ta)\cdot \ta\,dx
=&\int_{\T^2}\bar{\kappa}(\rho)\nabla\ta\cdot \nabla\ta\,dx
\ge\frac{\kappa c_0} {2}\norm{\nabla\ta}{L^{2}}^2.\label{energy5+1}
\end{align}
Next, we shall estimate each term on the right hand side of \eqref{energy3}.
First, it follows from integration by parts and the H\"older inequality that
\begin{align}\label{energy6}
\Big|\int_{\T^2} f_3\cdot \aa\,dx\Big|
\le&\left|\int_{\T^2} (\u\cdot \nabla\aa)\cdot\aa\,dx\right|
   +\left|\int_{\T^2} \left(\aa\div\u\right)\cdot\aa\,dx\right|
   +\frac{1}{2}\left|\int_{\T^2}m^2\div\u\cdot\aa\,dx\right|\nn\\
\le&C\norm{\div\u}{L^{\infty}}\norm{\aa}{L^{2}}^2
    +C\norm{m}{L^{\infty}}^2\norm{\div\u}{L^{2}}\norm{\aa}{L^{2}}\nn\\
\le&\frac{\left(\lambda+\mu\right) c_0} {16}\norm{\div\u}{L^{2}}^2
      + C (\norm{\u}{H^{3}}
      +\norm{m}{H^{3}}^4)\norm{\aa}{L^{2}}^2.
\end{align}
For the first term in $f_1$,
\begin{align}\label{energy7}
\Big| \int_{\T^2} \u\cdot\nabla \u\cdot \u\,dx\Big|\le C \norm{\nabla\u}{L^{\infty}}\norm{\u}{L^{2}}^2
\le C \norm{\u}{H^{3}}^3.
\end{align}
Thanks to \eqref{P} and Lemma \ref{fuhe}, we have
\begin{align}\label{energy9}
\Big| \int_{\T^2} I(a)\nabla \aa\cdot \u\,dx\Big|
\le& C \norm{I(a)}{L^{\infty}}\norm{ \nabla \aa}{L^{2}}\norm{\u}{L^{2}}\nn\\
\le& \frac{\mu c_0} {32}\norm{\u}{L^{2}}^2+C \norm{I(a)}{H^{2}}^2\norm{ \aa}{H^{1}}^2,\nn\\
\le& \frac{\mu c_0} {32}\norm{\nabla\u}{L^{2}}^2+C \norm{a}{H^{3}}^2\norm{ \aa}{H^{3}}^2,
\end{align}
\begin{align}
\label{energy10}
&\mu\Big| \int_{\T^2} \nabla I(a)\nabla\u\cdot \u\,dx\Big|
     +(\lambda+\mu)\Big|\int_{\T^2} \nabla I(a)\div\u\cdot \u\,dx\Big|\nn\\
&\quad\le C \norm{\nabla I(a)}{L^{\infty}}\norm{\nabla \u}{L^{2}}\norm{\u}{L^{2}}\nn\\
&\quad\le\frac{\mu c_0} {32}\norm{\nabla\u}{L^{2}}^2+C\norm{\nabla I(a)}{H^{2}}^2\norm{\u}{L^{2}}^2\nn\\
&\quad\le\frac{\mu c_0} {32}\norm{\nabla\u}{L^{2}}^2+C\norm{a}{H^{3}}^2\norm{\u}{H^{3}}^2,
\end{align}
and
\begin{align}\label{energy11}
\Big| \int_{\T^2} \ta \nabla J(a)\cdot \aa\,dx\Big|
\le& C \norm{\nabla J(a)}{L^{\infty}}\norm{\ta}{L^{2}}\norm{\aa}{L^{2}}\nn\\
\le&C\norm{a}{H^{3}}\norm{\aa}{L^{2}}\norm{\ta}{L^{2}}\nn\\
\le&C\norm{a}{H^{3}}\big(\norm{\ta}{H^{3}}^2+\norm{\aa}{H^{3}}^2\big).
\end{align}
Combining with  \eqref{energy7}, \eqref{energy9}-\eqref{energy11} gives
\begin{align}\label{energy12}
\Big| \int_{\T^2} f_1\cdot \u\,dx\Big|
\le\frac{\mu c_0} {16}\norm{\nabla\u}{L^{2}}^2
   +C\big(\norm{(a,\u)}{H^{3}}+  \norm{a }{H^{3}}^2\big)
  \norm{(\aa,\u,\ta)}{H^3}^2.
\end{align}
In the following, we bound the terms in $f_2$. To do so, we write
\begin{align}\label{energy13}
\Big| \int_{\T^2} f_2\cdot \ta\,dx\Big|
=\sum_{i=1}^{3}A_{2i}
\end{align}
with
\begin{align*}
&A_{21}\stackrel{\mathrm{def}}{=}-\int_{\T^2}\div(\ta\u)\cdot\ta\,dx,\qquad\qquad
A_{22}\stackrel{\mathrm{def}}{=}\kappa\int_{\T^2}(\nabla I(a))\nabla\ta\cdot \ta\,dx,\\
&A_{23}\stackrel{\mathrm{def}}{=}
      \int_{\T^2} \big(\frac{1}{1+a}(2\mu |D(\u)|^2+\lambda(\div \u)^2)\big)\cdot \ta\,dx.
\end{align*}
The term $A_{21}$ can be bounded as
\begin{align*}
\left|A_{21}\right|\leq&
\Big| \int_{\T^2} \u\cdot\nabla \ta\cdot \ta\,dx\Big|
    +\Big| \int_{\T^2} \ta\div\u\cdot \ta\,dx\Big|\nn\\
\le& C \norm{\u}{L^{\infty}}\norm{\nabla\ta}{L^{2}}\norm{\ta}{L^{2}}
       +C \norm{\ta}{L^{\infty}}\norm{\div\u}{L^{2}}\norm{\ta}{L^{2}}\nn\\
\le&\frac{\kappa c_0}{32}\norm{\nabla\ta}{L^{2}}^2
       +\frac{\left(\lambda+\mu\right) c_0}{16}\norm{\div\u}{L^{2}}^2
       +C \big(\norm{\u}{H^{3}}^{2}+\norm{\ta}{H^{3}}^{2}\big)\norm{\ta}{L^{2}}^{2}.
\end{align*}
For term $A_{22}$, by Lemma \ref{fuhe} and the Young's inequality, we have
\begin{align*}
\left|A_{22}\right|
\le C \norm{\nabla I(a)}{L^{\infty}}\norm{\nabla \ta}{L^{2}}\norm{\ta}{L^{2}}
\le \frac{\kappa c_0}{32}\norm{\nabla\ta}{L^{2}}^2
      +C\norm{ a}{H^{3}}^2\norm{\ta}{H^{3}}^2.
\end{align*}
Similarly, for the term  $A_{23}$, there holds
\begin{align*}
\left|A_{23}\right|
=&\Big| \int_{\T^2} \big(\frac{1}{1+a}(2\mu |D(\u)|^2+\lambda(\div \u)^2)\big)\cdot \ta\,dx\Big|\nn\\
\le& C (1+\norm{ I(a)}{L^{\infty}})\norm{ \nabla\u}{L^{\infty}}\norm{\nabla \u}{L^{2}}\norm{\ta}{L^{2}}\nn\\
\le& \frac{\mu c_0}{16}\norm{\nabla\u}{L^{2}}^2
        +C(1+\norm{ a}{H^{3}}^2)\norm{\u}{H^{3}}^{2}\norm{\ta}{H^{3}}^{2}.
\end{align*}
Plugging $A_{21}$-$A_{23}$ into \eqref{energy13}, we get
\begin{align}\label{energy14}
\Big| \int_{\T^2} f_2\cdot \ta\,dx\Big|
\le&\frac{\mu c_0}{16}\norm{\nabla\u}{L^{2}}^2
     +\frac{\left(\lambda+\mu\right) c_0}{16}\norm{\div\u}{L^{2}}^2
     +\frac{\kappa c_0}{16}\norm{\nabla\ta}{L^{2}}^2 \nn\\
  &+C\big((1+\norm{a}{H^3}^2)\norm{\u}{H^3}^2+\norm{(a,\ta)}{H^3}^2\big)\norm{\ta}{H^3}^2.
\end{align}
Inserting  \eqref{energy6}, \eqref{energy12} and \eqref{energy14} into \eqref{energy3}
 and using \eqref{energy4}-\eqref{energy5+1}, we arrive at a basic energy inequality
\begin{align}\label{energy15}
&\frac12\frac{d}{dt}\norm{\left(\aa,\u,\ta\right)}{L^2}^2+\mu\norm{\nabla\u}{L^2}^2+(\lambda+\mu)
\norm{\div\u}{L^2}^2+\kappa\norm{\nabla\ta}{L^2}^2\nn\\
&\quad\le C\big(\norm{(a,\u)}{H^{3}}
   +\norm{(a,\ta)}{H^{3}}^2+\norm{m}{H^{3}}^4+(1+\norm{a}{H^{3}}^2)\norm{\u}{H^{3}}^2
    \big)
    \norm{(\aa,\u,\ta)}{H^3}^2.
\end{align}

 Next, we are concerned with the higher energy estimates.
 Applying  $\la^s$ with $1\le s\le 3$ to \eqref{energy1}
 and then taking $L^2$ inner product with $(\la^s\aa, \la^s\u,\la^s\ta)$ yields
\begin{align}\label{energy16}
&\frac12\frac{d}{dt}\norm{\left(\la^s\aa,\la^s\u,\la^s\ta\right)}{L^2}^2
-\int_{\T^2}\la^{s}\div(\bar{\mu}(\rho)\nabla\u)\cdot\la^{s} \u\,dx\nn\\
&\qquad-\int_{\T^2}\la^{s}\nabla(\bar{\lambda}(\rho)\div\u)\cdot\la^{s} \u\,dx-\int_{\T^2}\la^{s}\div(\bar{\kappa}(\rho)\nabla\ta)\cdot\la^{s} \ta\,dx\nn\\
&\quad=\int_{\T^2}\la^{s} f_3\cdot\la^{s} \aa\,dx+\int_{\T^2}\la^{s} f_1\cdot\la^{s} \u\,dx+\int_{\T^2}\la^{s} f_2\cdot\la^{s} \ta\,dx.
\end{align}
The last three terms on the left hand side of \eqref{energy16}
can be dealt from \eqref{gaojie10}-\eqref{gaojie12} that
\begin{align}\label{energy17}
&\frac12\frac{d}{dt}\norm{\big(\la^s\aa,\la^s\u,\la^s\ta\big)}{L^2}^2
   +\frac{\mu c_0}{4}\norm{\la^{s+1}\u}{L^2}^2
   +\frac{(\lambda+\mu)c_0}{4}\norm{\la^{s}\div\u}{L^2}^2
   +\frac{\kappa c_0}{4}\norm{\la^{s+1}\ta}{L^2}^2 \nn\\
&\quad\le C\int_{\T^2}\la^{s} f_3\cdot\la^{s} \aa\,dx
+C\int_{\T^2}\la^{s} f_1\cdot\la^{s} \u\,dx+C\int_{\T^2}\la^{s} f_2\cdot\la^{s} \ta\,dx
\nn\\
&\qquad  + C\big(\norm{ \u}{H^3}^2+\norm{ \ta}{H^3}^2\big)\norm{a}{H^3}^2.
\end{align}
We now estimate successively terms on the right hand side of \eqref{energy17}.
To bound the first term in $f_3$, we rewrite it into
\begin{align*}
\int_{\T^2}\la^{s} (\u\cdot\nabla \aa)\cdot\la^{s} \aa\,dx
=&\int_{\T^2}[\la^{s},\, \u\cdot\nabla] \aa\cdot\la^{s} \aa\,dx
+\int_{\T^2}\u\cdot\nabla\la^{s}\aa\cdot\la^{s} \aa\,dx.
\end{align*}
Then according to  Lemma \ref{jiaohuanzi} and integration by parts, we have
\begin{align}\label{energy18}
\Big|\int_{\T^2}\la^{s} (\u\cdot\nabla \aa)\cdot\la^{s} \aa\,dx\Big|
\le& C\norm{[\la^s,\,\u\cdot\nabla]\aa}{L^2}\norm{\la^{s}\aa}{L^2}
      +C\norm{\div\u}{L^\infty}\norm{\la^{s}\aa}{L^2}^2\nn\\
\le&C\big(\norm{\nabla \u}{L^\infty}\norm{\la^{s}\aa}{L^2}
      +\norm{\la^{s} \u}{L^2}\norm{\nabla \aa}{L^\infty}\big)\norm{\la^{s}\aa}{L^2}\nn\\
\le&C\norm{ \u}{H^3}\norm{\aa}{H^3}^2.
\end{align}
For the second term in $f_3$, it follows from Lemma \ref{daishu} that
\begin{align}\label{energy19}
\int_{\T^2}\la^{s} (\aa\div\u)\cdot\la^{s} \aa\,dx\leq&C\big(\|\div\u\|_{L^{\infty}}\|\aa\|_{H^{s}}
        +\|\div\u\|_{H^{s}}\|\aa\|_{L^{\infty}}\big)\norm{\la^{s}\aa}{L^2}\nonumber\\
         \leq&\frac{\mu c_0}{64}\|\nabla\u\|_{H^3}^2+C\|\aa\|_{H^{3}}^4.
\end{align}
Similarly, we have
\begin{align}\label{energy21}
\int_{\T^2}\la^{s} (m^2\div\u)\cdot\la^{s} \aa\,dx
\leq&C\big(\|\div\u\|_{L^{\infty}}\|m^2\|_{H^{s}}
        +\|\div\u\|_{H^{s}}\|m^2\|_{L^{\infty}}\big)\norm{\la^{s}\aa}{L^2}\nonumber\\
         \leq&\frac{\mu c_0}{64}\|\nabla\u\|_{H^3}^2+C\norm{m}{H^3}^4\norm{\aa}{H^3}^2.
\end{align}
Collecting \eqref{energy18}-\eqref{energy21}, we can get
\begin{align}\label{energy22}
\left|\int_{\T^2}\la^{s} f_3\cdot\la^{s} \aa\,dx\right|
\le&\frac{\mu{c_0}}{32}\norm{\nabla\u}{H^3}^2
+C\big(\|\u\|_{H^3}+\|\aa\|_{H^3}^2+\norm{m}{H^3}^4\big)
     \norm{\aa}{H^3}^2.
\end{align}
We get by a  similar  derivation of \eqref{gaojie41} and \eqref{gaojie43} that
\begin{align}\label{energy23}
\left|\int_{\T^2}\la^{s} f_1\cdot\la^{s} \u\,dx\right|
\leq&\frac{3\mu{c_0}}{16}\norm{\la^{s+1}\u}{L^2}^2
   +C\norm{\u}{H^3}^3+C\norm{(\aa,\u,\ta)}{H^3}^2\norm{a}{H^3}^2
\end{align}
and
\begin{align}\label{energy24}
\left|\int_{\T^2}\la^{s} f_2\cdot\la^{s} \ta\,dx\right|
\leq&\frac{\mu c_0}{16}\norm{\la^{s+1}\u}{L^2}^2+\frac{\kappa c_0}{16}\norm{\la^{s+1}\ta}{L^2}^2\nn\\
     &+ C\big(\norm{\u}{H^3}+\norm{a}{H^3}^2+\norm{\ta}{H^3}^2\big)\norm{\ta}{H^3}^2
       +C\big(1+\norm{a}{H^3}^2\big)\norm{\u}{H^3}^4.
\end{align}
Plugging \eqref{energy22}-\eqref{energy24} into \eqref{energy17} and combining with \eqref{energy15}, we arrive at the desired estimate \eqref{energy2}. This completes the proof of Lemma \ref{haosan}.
\end{proof}

\subsection{Dissipation estimates for $(\aa,\u,\ta,\mathbf{G})$}
Next, we find the hidden dissipation of $\aa.$
We rewrite \eqref{energy1} into
 \begin{eqnarray}\label{dissp1}
    \left\{\begin{aligned}
    &\partial_t \aa+ \div\u  =f_3,\\
    &\partial_t\u - \mu\Delta \u -(\lambda+\mu)\nabla \div \u+\nabla\ta
            +\nabla\aa=\mathbf{M}(a,\u,\ta,\aa),\\
    &\partial_t\theta-\kappa\Delta\theta+ \div \u=\mathbf{N}(a,\u,\ta),\\
    &(\aa,\u,\ta)|_{t=0}=(\aa_0,\u_0,\ta_0),
    \end{aligned}\right.
    \end{eqnarray}
  where
   \begin{align}\label{dissp2}
    &\mathbf{M}(a,\u,\ta,\aa)\stackrel{\mathrm{def}}{=}- \u\cdot \nabla \u
    +I(a)\nabla\aa-I(a)\Big(\mu\Delta \u + (\lambda+\mu)\nabla \div \u\Big)-\ta\nabla J(a),\\
    &\mathbf{N}(a,\u,\ta)\stackrel{\mathrm{def}}{=}- \div (\ta\u)- \kappa I(a)\Delta \theta
    +\frac{1}{1+a}\Big(2\mu |D(\u)|^2+\lambda(\div \u)^2\Big),\label{dissp3}
    \end{align}
with $I(a)=\frac{a}{1+a}$ and $J(a)=\mathrm{ln}(1+a).$
Denote
\begin{align}\label{dissp4}
{\mathbf{G}}\stackrel{\mathrm{def}}{=} \mathbb{Q}\u-\frac{1}{\nu}\Delta^{-1}\nabla {\aa}
\end{align}
with $\mathbb{Q}=\nabla\Delta^{-1}\div$ and $\nu=\lambda+2\mu>0$.
Then, we find out that ${\mathbf{G}}$ satisfies
\begin{align}\label{dissp5}
\partial_t{\mathbf{G}}-\nu\Delta {\mathbf{G}}
   =\frac{1}{\nu}\q \u+\mathbb{Q}\mathbf{M}-\nabla\ta
   +\frac{1}{\nu}\Delta^{-1}\nabla f_3.
\end{align}
The goal of this subsection is to establish the dissipation estimates for $(\aa,\u,\ta,\mathbf{G})$.
\begin{lemma}\label{miduhaosan}
Let $(\aa,\u,\ta,\mathbf{G}) \in C([0, T];H^3)$ be a solution
  to the  system \eqref{dissp1} and  \eqref{dissp5},
then there holds
\begin{align}\label{dissp6}	
&\frac12\frac{d}{dt}\norm{(\aa,\u,\ta,\G)}{H^{{3}}}^2+\norm{\aa}{H^{{3}}}^2
   +\norm{(\nabla\u,\nabla\ta,\nabla\G)}{H^{{3}}}^2\nn\\
&\quad\le C\norm{(a,\u)}{H^{{3}}}^2\norm{(\nabla\u,\nabla\ta)}{H^{{3}}}^2
+C\big(\|(a,\u)\|_{H^3}+\norm{ (a,\aa,\u,\ta)}{H^3}^2\nn\\
&\qquad
+\norm{m}{H^3}^4+\norm{ a}{H^3}^2\norm{\u}{H^3}^2\big)
\norm{(\aa,\u,\ta)}{H^{3}}^2.
\end{align}
\end{lemma}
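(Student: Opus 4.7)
The plan is to produce \eqref{dissp6} by adding the $H^3$ energy estimate \eqref{energy2} from Lemma \ref{haosan} to two auxiliary $H^3$ identities: one obtained by testing the parabolic equation \eqref{dissp5} against $\mathbf{G}$, and one obtained by testing a damped scalar equation for $\aa$ alone against $\aa$. Lemma \ref{haosan} already supplies $\mu\|\nabla\u\|_{H^3}^2$ and $\kappa\|\nabla\ta\|_{H^3}^2$ on the left, but no dissipation on $\aa$; the two new identities, weighted by small constants $\eta_1,\eta_2>0$ and summed with \eqref{energy2}, will supply the missing dissipations for $\mathbf{G}$ and $\aa$ and absorb all linear cross terms into the existing parabolic norms.

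For the $\mathbf{G}$-estimate, apply $\Lambda^s$ with $s=0,1,2,3$ to \eqref{dissp5} and pair with $\Lambda^s\mathbf{G}$. Integration by parts on the leading term yields the dissipation $\nu\|\nabla\mathbf{G}\|_{H^3}^2$. On the right, the linear source $\frac{1}{\nu}\mathbb{Q}\u$ is controlled by Young into $\varepsilon\|\mathbf{G}\|_{H^3}^2+C\|\u\|_{H^3}^2$, the term $-\nabla\ta$ by integration by parts into $\varepsilon\|\nabla\mathbf{G}\|_{H^3}^2+C\|\ta\|_{H^3}^2$, and the nonlinear piece $\mathbb{Q}\mathbf{M}$ is handled by the product, commutator and composition estimates in Lemmas \ref{daishu}--\ref{fuhe} together with \eqref{eq:smalla}, yielding exactly the shape of nonlinearities appearing in \eqref{dissp6}; the forcing $\frac{1}{\nu}\Delta^{-1}\nabla f_3$ gains a full derivative for free, which compensates the derivative inside $f_3$. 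Using Lemmas \ref{lem-Poi}--\ref{lem2.2} and the conservation $\int\rho\u\,dx=0$ one has $\|\u\|_{L^2}\lesssim\|\nabla\u\|_{L^2}$, while the zero-mean property of $\mathbf{G}$ gives $\|\mathbf{G}\|_{L^2}\lesssim\|\nabla\mathbf{G}\|_{L^2}$; thus the linear right-hand contributions are absorbed by $\mu\|\nabla\u\|_{H^3}^2$, $\kappa\|\nabla\ta\|_{H^3}^2$ and $\nu\|\nabla\mathbf{G}\|_{H^3}^2$ provided the weight $\eta_1$ is small enough.

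The crux is the $\aa$-estimate. Taking divergence in the definition \eqref{dissp4} yields the algebraic identity $\div\mathbf{G}=\div\u-\tfrac{1}{\nu}(\aa-\bar\aa)$ on $\mathbb{T}^2$ (with $\bar\aa=\tfrac{1}{2}\|m\|_{L^2}^2$, since $\int a\,dx=0$), which once inserted into the continuity equation $\partial_t\aa+\div\u=f_3$ produces the damped scalar equation
\begin{equation*}
\partial_t\aa+\tfrac{1}{\nu}\aa=-\div\mathbf{G}+f_3+\tfrac{1}{\nu}\bar\aa.
\end{equation*}
Pairing $\Lambda^s$ of this identity with $\Lambda^s\aa$ for $s=0,1,2,3$ yields a genuine dissipation $\tfrac{1}{\nu}\|\aa\|_{H^3}^2$ on the left; the right-hand side is bounded by $C\|\nabla\mathbf{G}\|_{H^3}^2$ (absorbed by the $\mathbf{G}$-dissipation if $\eta_2\ll\eta_1$) plus nonlinear $f_3$-contributions controlled via Lemma \ref{jiaohuanzi} applied to the transport term $\u\cdot\nabla\aa$ and the $H^3$ algebra property (Lemma \ref{daishu}) applied to $\aa\,\div\u$ and $m^2\div\u$, with the quartic-in-$m$ term from $\bar\aa$ at $s=0$ fitting into the nonlinear family listed in \eqref{dissp6}. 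This damping trick is the main obstacle of the whole argument: the continuity equation is purely transport, so no direct energy test on $\aa$ can yield $\|\aa\|_{H^3}^2$ on the left; it is only via the algebraic relation $\div\mathbb{Q}\u=\div\mathbf{G}+\tfrac{1}{\nu}(\aa-\bar\aa)$, inherited from the definition of the effective velocity, that this transport equation is converted into a damped one and the missing $\aa$-dissipation is extracted.
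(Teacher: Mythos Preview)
Your proposal is correct and follows essentially the same route as the paper: Lemma \ref{haosan} for the $(\u,\ta)$ dissipation, the parabolic $\mathbf{G}$-equation \eqref{dissp5} for $\|\nabla\mathbf{G}\|_{H^3}^2$, and the damped scalar equation for $\aa$ (obtained by substituting $\div\u=\div\mathbf{G}+\tfrac1\nu\aa$ into the continuity law) for the missing $\|\aa\|_{H^3}^2$, all combined with suitable weights. Two remarks: (i) you are right to keep the mean $\bar\aa=\tfrac12\|m\|_{L^2}^2$ in the damped equation---the paper silently drops it---and your handling of this constant fits into the quartic-in-$m$ term on the right of \eqref{dissp6}; (ii) to absorb the linear contribution $C\|\ta\|_{H^3}^2$ coming from $-\nabla\ta$ in the $\mathbf{G}$-estimate you need $\|\ta\|_{L^2}\lesssim\|\nabla\ta\|_{L^2}+\|\nabla\u\|_{L^2}^2+\|\aa\|_{L^2}$, which is not a plain Poincar\'e inequality (since $\ta$ need not have mean zero) but exactly Lemma \ref{poincareineq}---the paper invokes it at this point, and you should too.
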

\begin{proof}
For $s=0,$ we can follow  the derivation of \eqref{energy15} to get the desired estimates, here we omit the details. For $s=1,2,3$,
applying ${\la^s} $ to  the above equation \eqref{dissp5},
 and  taking the $L^2$-inner product with ${\la^s}\G$ give
\begin{align*}
	&\frac12\frac{d}{dt}\norm{\la^{s} \G}{L^{2}}^2+
       \nu\norm{\la^{s+1} \G}{L^{2}}^2\nn\\
	&\quad\le C\left|\int_{\T^2}\la^s\q \u\cdot\la^s \G\,dx\right|
     +C\left|\int_{\T^2}\la^s\nabla\ta\cdot\la^s \G\,dx\right|\nn\\
&\qquad+C\left|\int_{\T^2}\la^{s-1}f_3\cdot\la^s \G\,dx\right|
   +C\left|\int_{\T^2}\la^s\mathbb{Q}\mathbf{M}\cdot\la^s \G\,dx\right|.
\end{align*}
With the aid of the Young inequality,  we obtain
\begin{align}\label{dissp7}
	&\frac12\frac{d}{dt}\norm{\la^{s} \G}{L^{2}}^2+\frac{\nu}{2}\norm{\la^{s+1} \G}{L^{2}}^2\nn\\
	&\quad\le C\big(\norm{\la^{s-1}\q\u}{L^{2}}^2+\norm{\la^{s}\ta}{L^{2}}^2+\norm{\la^{s-2}f_3}{L^{2}}^2
	+\norm{\la^{s-1}\mathbf{M}}{L^{2}}^2\big).
\end{align}
By using the fact
$$\div\q\u=\div\u,$$
we infer from the first equation in \eqref{dissp1} and \eqref{dissp4}
that $\aa$ satisfies a damped transport equation
\begin{align*}
\partial_t{\aa}+\frac{1}{\nu}{\aa}=-\div {\mathbf{G}}+f_3.
\end{align*}
For the above equation, we get by a similar derivation of \eqref{dissp7} that
\begin{align*}
\frac12\frac{d}{dt}\norm{\la^s {\aa}}{L^{2}}^2+\frac{1}{\nu}\norm{\la^s {\aa}}{L^{2}}^2
=-\int_{\T^2}\la^s\div {\mathbf{G}}\cdot\la^s {\aa}\,dx+\int_{\T^2}\la^sf_3\cdot\la^s {\aa}\,dx.
\end{align*}
The last term on the right hand side of the above equality can be bounded the same as \eqref{energy22}
\begin{align*}
\left|\int_{\T^2}\la^{s} f_3\cdot\la^{s} \aa\,dx\right|
\le&\varepsilon\|\nabla\u\|_{H^3}^2
+C(\|\u\|_{H^3}+\|\aa\|_{H^3}^2
  +\norm{m}{H^3}^4)\norm{\aa}{H^3}^2,
\end{align*}
from which, we have
\begin{align}\label{dissp10}
\frac12\frac{d}{dt}\norm{\la^s {\aa}}{L^{2}}^2
   +\frac{1}{2\nu}\norm{\la^s {\aa}}{L^{2}}^2
   \le&\varepsilon\|\nabla\u\|_{H^3}^2+C\norm{\nabla\G}{H^{{3}}}^2\nn\\
&
+ C(\|\u\|_{H^3}+\norm{\aa}{H^3}^2
      +\norm{m}{H^3}^4)\norm{\aa}{H^3}^2.
\end{align}
From the third equation in \eqref{dissp1}, we get
\begin{align}\label{dissp11}
	&\frac12\frac{d}{dt}\norm{\la^{s} \ta}{L^{2}}^2+\frac{\kappa}{2}\norm{\la^{s+1} \ta}{L^{2}}^2\le C\big(\norm{\la^{s}\u}{L^{2}}^2+\norm{\la^{s-1}\mathbf{N}}{L^{2}}^2\big).
\end{align}
From  \eqref{dissp7}, \eqref{dissp10} and \eqref{dissp11},
by multiplying  suitable large constant lead to
\begin{align}\label{dissp12}
	&\frac12\frac{d}{dt}\norm{({\aa},\ta,\G)}{H^{{3}}}^2+\norm{\aa}{H^{{3}}}^2
	+\norm{\nabla\ta}{H^{{3}}}^2+\norm{\nabla\G}{H^{{3}}}^2\nn\\
	&\quad\le C\varepsilon\|\nabla\u\|_{H^3}^2+C\|\u\|_{H^3}^4
   + C\norm{\u}{H^{{3}}}^2+C\norm{f_3}{H^{{1}}}^2
   +C\norm{(\mathbf{M},\mathbf{N})}{H^{{2}}}^2\nn\\
    &\qquad+ C(\|\u\|_{H^3}
        +\|\aa\|_{H^3}^2
            +\norm{m}{H^3}^4)\norm{\aa}{H^3}^2,
\end{align}
where we have used the fact from Lemma \ref{poincareineq},
\begin{align*}
\norm{\ta}{H^{3}}^2
\approx \|{\ta}\|_{L^{2}}+\|{\ta}\|_{\dot{H}^{3}}^2
 \le C\big(\|\nabla\ta\|_{L^2}^2+\|\nabla\u\|_{L^2}^4+\|\aa\|_{L^2}^2+\norm{\nabla\ta}{H^{3}}^2\big).
\end{align*}
Using the fact that $H^2(\T^2)$ is Banach algebra, the nonlinear terms in \eqref{dissp12} can be estimated as follows. Due to
\begin{align*}
\norm{\u\cdot\nabla\aa}{H^{{1}}}^2+\norm{{\aa} \div\u}{H^{{1}}}^2
\le C\norm{\u}{H^{{2}}}^2\norm{\nabla\aa}{H^{{2}}}^2+C\norm{\aa}{H^{{2}}}^2\norm{\div\u}{H^{{2}}}^2
\leq C\norm{\u}{H^{{3}}}^2\norm{\aa}{H^{{3}}}^2,
\end{align*}
and
\begin{align*}
\norm{m^2 \div\u}{H^{{1}}}^2
\le C\norm{m}{H^{{3}}}^4\norm{\u}{H^{{3}}}^2,
\end{align*}
we obtain
\begin{align}\label{dissp13}
\norm{f_3}{H^{{1}}}^2
\le C(\norm{\aa}{H^{{3}}}^2+\norm{m}{H^{{3}}}^4
   )\norm{\u}{H^{{3}}}^2.
\end{align}
From \eqref{dissp2} and \eqref{dissp3}, by using Lemma \ref{fuhe}, we get
\begin{align}\label{dissp14}
\norm{\mathbf{M}}{H^{{2}}}^2
\le& C\norm{\u}{H^{{2}}}^2\norm{\nabla\u}{H^{{2}}}^2
   +C\norm{a}{H^{{2}}}^2\norm{\nabla \aa}{H^{{2}}}^2
    +C\norm{a}{H^{{2}}}^2\norm{\nabla\u}{H^{{3}}}^2
     +\norm{a}{H^{{2}}}^2\norm{\ta}{H^{{3}}}^2\nn\\
\le& C\norm{(a,\u)}{H^{{3}}}^2\norm{\nabla\u}{H^{{3}}}^2
      +C\norm{(\aa,\ta)}{H^{{3}}}^2\norm{a}{H^{{3}}}^2
\end{align}
and
\begin{align}\label{dissp15}
\norm{\mathbf{N}}{H^{{2}}}^2
\le& C\norm{\u}{H^{{2}}}^2\norm{\nabla\ta}{H^{{2}}}^2
+C\norm{\ta}{H^{{2}}}^2\norm{\div \u}{H^{{2}}}^2\nn\\
&+C\norm{I(a)}{H^{{2}}}^2\norm{\nabla\ta}{H^{{3}}}^2
+C(1+\norm{I(a)}{H^{{2}}}^2)\norm{\nabla\u}{H^{{2}}}^4
\nn\\
\le& C\big((1+\norm{a}{H^{{3}}}^2)\norm{\u}{H^{{3}}}^2+\norm{\ta}{H^{{3}}}^2\big)
\norm{\u}{H^{{3}}}^2
+C\norm{(a,\u)}{H^{{3}}}^2\norm{\nabla\ta}{H^{{3}}}^2.
\end{align}
Inserting \eqref{dissp13}-\eqref{dissp15} into \eqref{dissp12} leads to
\begin{align}\label{dissp16}
&\frac12\frac{d}{dt}\norm{(\aa,\ta,\G)}{H^{{3}}}^2+\norm{\aa}{H^{{3}}}^2
	+\norm{(\nabla\ta,\nabla\G)}{H^{{3}}}^2\nn\\
&\quad\le C\varepsilon\|\nabla\u\|_{H^3}^2+C\norm{\u}{H^{{3}}}^2
       +C\norm{(a,\u)}{H^{{3}}}^2\norm{\nabla\ta}{H^{{3}}}^2
       +C\norm{\ta}{H^{{3}}}^2\norm{a}{H^{{3}}}^2\nn\\
&\qquad+C\norm{(a,\u)}{H^{{3}}}^2\norm{\nabla\u}{H^{{3}}}^2
       +C\big((1+\norm{a}{H^{{3}}}^2)\norm{\u}{H^{{3}}}^2+\norm{\ta}{H^{{3}}}^2\big)
         \norm{\u}{H^{{3}}}^2\nn\\
&\qquad+ C\big(\|(a,\aa,\u)\|_{H^3}^2+\norm{ \u}{H^3}
        +\norm{m}{H^3}^4\big)\norm{(\aa,\u)}{H^3}^2.
\end{align}
The previous estimate \eqref{energy2} gives
\begin{align}\label{dissp17}
&\frac12\frac{d}{dt}\norm{(\aa,\u,\ta)}{H^{3}}^2+\mu\norm{\nabla\u}{H^{3}}^2
+(\lambda+\mu)\norm{\div\u}{H^{3}}^2+\kappa\norm{\nabla\ta}{H^{3}}^2\nn\\
& \quad\leq C\big(\norm{(a,\u)}{H^3}
       +(1+\norm{a}{H^3}^2)\norm{\u}{H^3}^2
       +\norm{m}{H^3}^4\nn\\
 & \qquad \quad \quad  +\norm{(a,\aa,\ta}{H^3}^2\big)
      \|(\aa, \u,\ta)\|_{H^{3}}^2.
\end{align}

Thus,  multiplying the above inequality \eqref{dissp17} by a suitable large constant and
then adding to  \eqref{dissp16}, we can finally get  that
\begin{align*}	
&\frac12\frac{d}{dt}\norm{(\aa,\u,\ta,\G)}{H^{{3}}}^2+\norm{\aa}{H^{{3}}}^2
   +\norm{(\nabla\u,\nabla\ta,\nabla\G)}{H^{{3}}}^2\nn\\
&\quad\le C\norm{(a,\u)}{H^{{3}}}^2\norm{(\nabla\u,\nabla\ta)}{H^{{3}}}^2
+C
\big(\|(a,\u)\|_{H^3}+\norm{( a,\aa,\u,\ta)}{H^3}^2\nn\\
&\qquad
+\norm{m}{H^3}^4+\norm{ a}{H^3}^2\norm{\u}{H^3}^2\big)
\norm{\aa,\u,\ta}{H^{3}}^2.
\end{align*}
This completes the proof of Lemma \ref{miduhaosan}.
\end{proof}

\subsection{Bootstrap argument}
In this section, we prove Theorem \ref{dingli}.
For any $(a_0, \u_0,\ta_0, m_0)\in H^{{3}}({ \mathbb{T} }^2)$,  we can prove the local well-posedness of \eqref{mm3}
by using the  standard energy method.  To prove the global well-posedness, the aim is
 on the global bound of
$(a,\u,\ta,m)$ in $H^3(\mathbb T^2)$. We use the bootstrapping argument and start by making the
ansatz that
\begin{align}\label{boot1}
	\sup_{t\in[0,T]}\norm{(a,\mathbf{u},\ta,m)}{H^{3}}\le \delta,
\end{align}
for suitably chosen $0<\delta<1$. The main efforts are devoted to proving that, if the initial norm is taken to be sufficiently small, namely
\begin{align*}
\norm{(a_0,\mathbf{u}_0,\ta_0,m_0)}{H^{{3}}}\le\varepsilon,
\end{align*}
with sufficiently small $\varepsilon>0$, then
\begin{align*}
\sup_{t\in[0,T]}\norm{(a,\mathbf{u},\ta,m)}{H^{3}}\le \frac{\delta}{2}.
\end{align*}
Under the assumption of \eqref{boot1}, we infer from \eqref{dissp6} that
\begin{align}\label{boot2}	&\frac12\frac{d}{dt}\norm{(\aa,\u,\ta,\G)}{H^{{3}}}^2+\norm{\aa}{H^{{3}}}^2+\norm{(\nabla\u,\nabla\ta,\nabla\G)}{H^{{3}}}^2
	\nn\\
	&\quad\le C
\delta(\delta^3+\delta+1)\norm{(\aa,\u,\ta)}{H^{3}}^2+C\delta^2\norm{(\nabla\u,\nabla\ta)}{H^{{3}}}^2,
\end{align}
where we use the fact
\begin{align*}
\norm{\aa}{H^{{3}}}^2=&\norm{a+\frac{1}{2}m^2}{H^{{3}}}^2
 \le\norm{a}{H^{{3}}}^2+\frac{1}{2}\norm{m^2}{H^{{3}}}^2
 \le\norm{a}{H^{{3}}}^2+\frac{1}{2}\norm{m}{H^{{3}}}^4
 \le C\delta^2.
\end{align*}
Denote
\begin{align*}
	{\mathcal{E}(t)}=&\norm{(\aa,\u,\ta,\G)}{H^{{3}}}^2
\end{align*}
and
\begin{align*}
	{\mathcal{D}(t)}=&\norm{\aa}{H^{{3}}}^2+\norm{(\nabla\u,\nabla\ta,\nabla\G)}{H^{{3}}}^2.
\end{align*}
Thanks to Lemma \ref{poincareineq}, there holds
\begin{align}\label{tapoin}
\norm{\ta}{H^{3}}^2
	\approx& \|{\ta}\|_{L^{2}}+\|{\ta}\|_{\dot{H}^{3}}^2\nn\\ \le&C\big(\|\nabla\ta\|_{L^2}^2+\|\nabla\u\|_{L^2}^4+\|\aa\|_{L^2}^2+\norm{\nabla\ta}{H^{3}}^2\big)\nn\\
\le&C\norm{\nabla\ta}{H^{3}}^2+C\delta^2\norm{\nabla\u}{H^{3}}^2+C\norm{\aa}{H^{3}}^2.
\end{align}
From \eqref{P} and \eqref{jinq1}, choosing $\delta$ small enough in \eqref{boot2} implies that
\begin{align}\label{boot3}
	\frac{d}{dt}{\mathcal{E}(t)}+\frac12{\mathcal{D}(t)}\le 0.
\end{align}
On the one hand, notice that $\G$ is a potential function, the following Poincar\'{e} inequality holds\begin{equation*}
		\begin{split}
			\norm{\G}{L^{{2}}}\leq C\norm{\nabla\G}{L^{{2}}},
		\end{split}
	\end{equation*}
from which and \eqref{tapoin} give rise  to
\begin{align*}
\mathcal{E}(t)\le{C\mathcal{D}(t)}.
\end{align*}
Then  we get
\begin{align*}
	\frac{d}{dt}{\mathcal{E}(t)}+c{\mathcal{E}(t)}\le 0.
\end{align*}
Solving this inequality yields
\begin{align}\label{boot4}
	{\mathcal{E}(t)}\le Ce^{-ct}.
\end{align}
Hence, we get
\begin{align}\label{boot5}
\int_0^t\big(\norm{ \aa(t')}{H^3}^2+\norm{ \u(t')}{H^3}^2+\norm{ \ta(t')}{H^3}^2\big)\,dt'
\le C.
\end{align}
Due to $$ \frac{1}{2}c_0\le\rho\le 2c_0^{-1},$$
we have
\begin{align*}
 \tilde{c}_0\le\frac{1}{1+a}\le \tilde{c}_0^{-1}.
\end{align*}
Hence, there holds
\begin{align*}
\frac12\norm{a}{H^{3}}^2-\frac{1}{2}\int_{\T^2}\frac{a}{1+a}(\la^{3}a)^2 \,dx\ge
C\norm{a}{H^{3}}^2,
\end{align*}
from which and Lemma \ref{gaojie}, we have
\begin{align}\label{boot6}
&\norm{(a,\u,\ta,m)}{H^{3}}^2
\leq \norm{(a_0,\u_0,\ta_0,m_0)}{H^{3}}^2\nn\\
&\quad+ C\int_0^t\big((1+\norm{a}{H^3}^2)(1+\norm{\u}{H^3})\norm{\u}{H^3}
       +\norm{(\aa,\u,\ta)}{H^3}^2\big)\norm{( a, \u,\ta, m)}{H^{3}}^2\,d\tau.
\end{align}
 From \eqref{boot4} and \eqref{boot5}, exploiting
the Gronwall inequality to \eqref{boot6}  implies that
\begin{align*}
\norm{(a,\u,\ta,m)}{H^{3}}^2
\le&C\norm{(a_0,\u_0,\ta_0,m_0)}{H^{3}}^2\\
&\quad\exp
\left\{C\int_0^T\big(
      (1+\norm{a}{H^3}^2)(1+\norm{\u}{H^3})\norm{\u}{H^3}
        +\norm{(\aa,\u,\ta)}{H^3}^2\big)\,d\tau\right\}\\
\le& C\varepsilon^2.
\end{align*}
Taking $\varepsilon$ small enough so that $C\varepsilon\le \delta/2$, we deduce from a continuity argument that the local solution can be extended as a global one in time. Consequently, we complete the proof of our main theorem.

\bigskip

\bigskip
 \section*{Acknowledgement.}
 {The first author of this work is supported by  the Guangdong Provincial Natural Science Foundation under grant 2022A1515011977. The third author of this work is supported by the China Postdoctoral Science Foundation under grant 2023TQ0309.}

\bigskip
\noindent{Conflict of Interest:} The authors declare that they have no conflict of interest.

\bigskip
\noindent{Data availability statement:}
 Data sharing not applicable to this article as no datasets were generated or analysed during the
current study.

\vskip .3in
\bibliographystyle{abbrv}

\end{document}